\documentclass[11pt]{amsart}

%     If you need symbols beyond the basic set, uncomment this command.
%\usepackage{amssymb}

%     If your article includes graphics, uncomment this command.
%\usepackage{graphicx}

%     If the article includes commutative diagrams, ...
%\usepackage[cmtip,all]{xy}

%     Update the information and uncomment if AMS is not the copyright
%     holder.
%\copyrightinfo{2009}{American Mathematical Society}

\usepackage{amsmath, amsfonts, amssymb,amsthm}
\usepackage{amstext}
\usepackage{mathrsfs}

\usepackage{float,epsf,subfigure}
\usepackage[all,cmtip]{xy}
\usepackage{hyperref}
\usepackage{algorithm}
\usepackage{algorithmic}
\usepackage{mathtools}
\usepackage{float}
\usepackage{bm}
\setlength{\parskip}{.1 in plus 2pt minus 2pt}
\setlength{\textheight}{7.8 in}

\theoremstyle{plain}
\newtheorem{theorem}{Theorem}[section]

\newtheorem{cor}[theorem]{Corollary}
\newtheorem{def-thm}[theorem]{Definition-Theorem}
\newtheorem{lemma}[theorem]{Lemma}

\newtheorem*{tha}{Theorem A}

\theoremstyle{definition}

\newtheorem{remark}[theorem]{Remark}

\def\min{\mathop{\mathrm{min}}}

\begin{document}
\title[Nevanlinna theory on complete K\"ahler manifolds]{Nevanlinna theory on complete K\"ahler manifolds}
\author[X.J. Dong]
{Xianjing Dong}

\address{Academy of Mathematics and Systems Science\\ Chinese Academy of Sciences \\ Beijing, 100190, P.R. China}
\email{xjdong@amss.ac.cn}

\thanks{The work was supported by  Postdoctoral Science Fund of China (No. 2020M670488).}

\subjclass[2010]{30D35, 32H30.} \keywords{Nevanlinna theory; Second Main Theorem; K\"ahler manifold; Ricci curvature; Brownian motion.}
%\thanks{The second author was supported in part by the Simon Foundation (Grant No. 531604)}
\date{}
\maketitle \thispagestyle{empty} \setcounter{page}{1}

\begin{abstract}
\noindent We study  Nevanlinna theory on complete  K\"ahler manifolds. 
 As a consequence of the main result, we prove a defect relation of holomorphic mappings from  complete K\"ahler manifolds of non-positive sectional curvature into complex projective manifolds under certain  growth condition. 
\end{abstract}

\vskip\baselineskip

\setlength\arraycolsep{2pt}
\medskip

\section{Introduction}

Early in 1972, Carlson and Griffiths  \cite{gri, gth} established the equi-distribution theory of  holomorphic mappings from
$\mathbb C^m$ into complex projective  manifolds intersecting divisors.
Later,  Griffiths and King \cite{gri1,gth}  further generalized  the theory from $\mathbb C^m$ to  affine  manifolds. 
Let us  review  this  theory briefly.

Let $V$ be a complex projective  manifold of complex dimension $ m,$ and
 let $L\rightarrow V$ be a positive line bundle over $V.$ For a reduced divisor $D$  on $V$ and a  holomorphic mapping $f:\mathbb C^m\rightarrow V,$
 we have the standard  notations $T_f(r,L),$ $m_f(r,D)$ and $N_f(r,D)$ in Nevanlinna theory (see \cite{Noguchi,ru} or Remark \ref{remark}).
Carlson and Griffiths proved the following Second Main Theorem
\begin{tha}\label{sd} Let $L\rightarrow V$ be a positive  line bundle and let a reduced divisor  $D\in|L|$  be of simple normal
crossing type.
Let $f:\mathbb C^m\rightarrow V$ be a differentiably non-degenerate equi-dimensional holomorphic mapping. Then for any $\delta>0$
  \begin{eqnarray*}
  % \nonumber to remove numbering (before each equation)
     T_f(r,L)+T_f(r,K_V)
     \leq N_f(r,D)+O\big{(}\log T_f(r,L)+\delta\log r\big{)}
  \end{eqnarray*}
  holds for all $r>1$ outside a set $E_\delta\subset(1,\infty)$ of finite Lebesgue measure.
\end{tha}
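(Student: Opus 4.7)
The plan is to follow the Carlson–Griffiths method of singular volume forms. Fix a Hermitian metric $h$ on $L$ with positive Chern form $\omega = c_1(L,h)$, a defining section $s\in H^0(V,L)$ of $D$, and a smooth positive volume form $\Omega$ on $V$ (so that $\Omega^{-1}$ is a metric on $K_V$). The central object is the singular volume form
\[
\Psi = \frac{\Omega}{|s|_{h}^{2}\,(\log|s|_{h}^{2})^{2}},
\]
which is globally integrable on $V$; the factor $(\log|s|^{2})^{-2}$ is precisely what is needed to tame the pole of $|s|^{-2}$ along $D$, using the simple normal crossing hypothesis.

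Since $f$ is equi-dimensional and differentiably non-degenerate, $f^{*}\Psi$ is a nonnegative top form on $\mathbb{C}^{m}$, positive off a proper analytic subset. Writing $f^{*}\Psi = \xi \cdot (dd^{c}|z|^{2})^{m}$, I would split
\[
\log \xi \;=\; \log\frac{(f^{*}\omega)^{m}}{f^{*}\Omega} \;-\; 2\log|s\circ f|_{h} \;-\; 2\log\bigl|\log|s\circ f|_{h}^{2}\bigr| + O(1),
\]
and average over the spheres $\{|z|=r\}$. Applying Jensen's formula in $\mathbb{C}^{m}$, the first term converts to $T_{f}(r,L)+T_{f}(r,K_{V})$ (because $-dd^{c}\log\Omega$ represents $c_{1}(K_{V})$); the second term yields $m_{f}(r,D) = T_{f}(r,L) - N_{f}(r,D) + O(1)$ by the First Main Theorem; the third is a lower-order doubly logarithmic correction.

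The upper bound on $\int_{|z|=r}\log\xi\,\sigma_{r}$ is obtained from the concavity of $\log$: I would replace the sphere average of $\log \xi$ by the log of the sphere average of $\xi$, then by the log of the ball integral of $\xi$. This ball integral is controlled in terms of the order function $T_{f}(r,L)$, and the allowed error $O(\log T_{f}(r,L)+\delta\log r)$ outside a set $E_{\delta}$ of finite Lebesgue measure then emerges from the Borel–Nevanlinna growth lemma, applied after differentiating the ball integral in $r$ twice.

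The main technical obstacle is precisely this last calculus step: one must iterate the Borel lemma carrying along the double-logarithmic singularity of $\Psi$, and combine several such estimates cleanly in order to arrive at the stated form of the error. A secondary subtlety is verifying that $\Psi$ is in fact globally integrable along a simple normal crossing divisor and that the factor $\log\bigl|\log|s|^{2}\bigr|$ really does remain lower-order after sphere averaging; both rest on the SNC hypothesis on $D$ and are what force the particular choice of $(\log|s|^{2})^{-2}$ in the construction of $\Psi$.
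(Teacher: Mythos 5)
Your route is the original Carlson--Griffiths one (logarithmically damped singular volume form, Green--Jensen, concavity of $\log$, Borel lemma), which is genuinely different from the paper's: the paper deduces Theorem A as the special case $\kappa\equiv 0$ of Theorem \ref{thm1}, and proves that theorem with the Wong--Lang family $\Phi_{D,\lambda}=\Omega/\prod_j\|s_j\|^{2(1-\lambda)}$, Lang's Lemma \ref{op}, and the choice $\lambda=1/T_f(r,L)$. The Wong--Lang route buys the sharp coefficient $\frac{m+k}{2}$ in front of $\log T_f(r,L)$ and the truncated counting function $N_f^{[1]}(r,D)$; your route only yields the coarser error term of Theorem A, which is all that is claimed here, so it is an acceptable strategy.

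Two of your displayed formulas, however, are wrong as written and must be repaired before the argument closes. First, with $D=\sum_{j=1}^q D_j$ of simple normal crossing type, the correct singular form is
\[
\Psi=\frac{\Omega}{\prod_{j=1}^q\|s_j\|^{2}\bigl(\log\|s_j\|^{2}\bigr)^{2}},
\]
with the product taken over the irreducible components. Your version, with a single section $s=s_D$ of the full divisor, is \emph{not} locally integrable at a crossing point: near $z_1=z_2=0$ the local model is $\int \frac{du\,dv}{(u+v)^2}$ after $u=-\log r_1$, $v=-\log r_2$, which diverges, whereas the product form gives the convergent $\int\frac{du\,dv}{u^2v^2}$. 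Second, in your decomposition of $\log\xi$ the first term $\log\frac{(f^*\omega)^m}{f^*\Omega}$ is the pullback of a continuous function on the compact manifold $V$ and hence is $O(1)$; it cannot produce $T_f(r,L)+T_f(r,K_V)$. The correct first term is $\log\frac{f^*\Omega}{(dd^c\|z\|^2)^m}$, whose $dd^c$ is $f^*c_1(K_V)+[{\rm Ram}_f]$ and which therefore contributes $T_f(r,K_V)+N_{{\rm Ram}_f}(r)\geq T_f(r,K_V)$ under Green--Jensen; the term $T_f(r,L)$ then comes once, from $-\log\|s_D\circ f\|^2$ via the First Main Theorem ($2m_f(r,D)=2T_f(r,L)-2N_f(r,D)+O(1)$). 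As you have written it, the bookkeeping yields $2T_f(r,L)$ and omits the ramification divisor. With these two corrections the remainder of your plan (concavity of $\log$, the bound of the ball integral of $\xi$ by a power of $T_f(r,L)$, and two applications of the Borel lemma) is the standard and correct conclusion of the classical proof.
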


Theorem A was extended by Sakai \cite{Sakai} in terms of Kodaira dimension,
and generalized by Shiffman \cite{Shiff} in the singular divisor case. More  investigations
were done by Wong, Lang, Cherry and Noguchi (see \cite{gri1,Lang,Noguchi,No,ru,wong}).

The purpose of this paper is to generalize  Theorem A to complete  K\"ahler manifolds.
 Our approach is to combine stochastic method with Wong-Lang's technique.
Recall that the Brownian motion method was first used by Carne \cite{carne} in proving Nevanlinna's Second Main Theorem of meromorphic functions on $\mathbb C.$  Later, Atsuji \cite{at, at1, at2, atsuji} developed this technique to study the Second Main Theorem of meromorphic functions on
 complete K\"ahler manifolds.
 Recently, Dong-He-Ru \cite{Dong} re-visited this technique and
    provided a  probabilistic proof of Cartan's Second Main Theorem of holomorphic curves.

We  state the main result. For  technical reasons, all  manifolds (as domains)  are assumed to be open in this paper.
  Let $M$ be a  complete K\"ahler manifold of non-positive sectional curvature with complex dimension $m=\dim_{\mathbb C}V.$
  For a holomorphic mapping $f:M\rightarrow V,$
one can extend  the definition of classical Nevanlinna's functions (see Section 2) to K\"ahler manifold $M$ naturally.
  Let ${\rm{Ric}}_M$  be the Ricci curvature tensor of $M,$
 set
\begin{equation}\label{kappa}
  \kappa(t)=\frac{1}{2m-1}\min_{x\in \overline{B_o(t)}}R_M(x),
\end{equation}
where $R_M(x)$ is the
pointwise lower bound of Ricci curvature defined by
$$ R_M(x)=\inf_{\xi\in T_{x}M, \ \|\xi\|=1}{\rm{Ric}}_M(\xi,\overline{\xi}).$$
 \begin{theorem}\label{thm1} Let $L\rightarrow V$ be a positive  line bundle and let a reduced divisor $D\in|L|$ be of simple normal
crossing type. Let  $f:M\rightarrow V$ be a differentiably non-degenerate equi-dimensional holomorphic mapping. Then
  \begin{eqnarray*}
  % \nonumber to remove numbering (before each equation)
     & & T_f(r,L)+T_f(r,K_V)-N^{[1]}_f(r,D) \\
    &\leq& \frac{m+k}{2}\log T_f(r,L)+O\Big(\log^+\log T_f(r,L)-\kappa(r)r^2+\log^+\log r\Big)
  \end{eqnarray*}
  holds for all $r>1$ outside a set of finite Lebesgue measure, where  $k$ is the complexity of $D$ defined by $(\ref{com}).$
\end{theorem}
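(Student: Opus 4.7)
The plan is to combine the Carlson--Griffiths singular volume form construction with a stochastic/It\^o-calculus analysis of Brownian motion on $M$, in the spirit of Atsuji \cite{atsuji} and of the authors' earlier work \cite{Dong}. The non-positive sectional curvature of $M$ enters through the Laplacian/Hessian comparison theorems, which provide control of both the exit times of Brownian motion from geodesic balls and of the behaviour of the distance function.

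First I would decompose $D=\sum_{j=1}^q D_j$ into irreducible components, choose defining sections $s_j\in H^0(V,\mathcal O(D_j))$, and fix a smooth Hermitian metric on $L$ together with a smooth volume form $\Omega$ on $V$. Following Carlson--Griffiths I would form a singular volume form of the shape
$$
\Psi=\frac{c\,\Omega}{\prod_{j=1}^q\|s_j\|^2\bigl(\log\|s_j\|^2\bigr)^2},
$$
for which the Poincar\'e--Lelong formula yields
$$
-dd^c\log\Psi = c_1(L)+c_1(K_V)-\sum_{j=1}^q [D_j]+\eta,
$$
where $\eta$ has only mild logarithmic singularities along $D$. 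Because $f$ is differentiably non-degenerate, $f^*\Omega$ is a volume form on $M$ and $\log(f^*\Psi/dV_M)$ is globally defined up to controlled singularities supported on $f^{-1}(D)$.

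Next I would let $X_t$ be a Brownian motion on $M$ starting at a basepoint $o$ and let $\tau_r$ denote the first exit time from $B_o(r)$. Applying It\^o's formula to $\log(f^*\Psi/dV_M)(X_t)$ and taking expectations gives a dynamical Jensen identity whose left-hand side, after substituting the curvature identity above, recovers exactly
$$
T_f(r,L)+T_f(r,K_V)-N_f^{[1]}(r,D)
$$
(in the K\"ahler-stochastic formulation of Section 2), while the right-hand side consists of a boundary term $\mathbb{E}\bigl[\log(f^*\Psi/dV_M)(X_{\tau_r})\bigr]$ together with the Ricci integral $\tfrac12\mathbb{E}\bigl[\int_0^{\tau_r}\mathrm{Ric}_M(\dot X_s,\dot X_s)\,ds\bigr]$. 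The boundary term is controlled by the mildness of the singularities of $\Psi$: after applying the concavity of $\log$ twice (once under the expectation via Jensen, then again via a Borel-type calculus lemma to pass to an estimate valid outside an exceptional set of finite Lebesgue measure), one gets
$$
\mathbb{E}\!\left[\log^+\tfrac{f^*\Psi}{dV_M}(X_{\tau_r})\right]\leq \tfrac{m+k}{2}\log T_f(r,L)+O\bigl(\log^+\log T_f(r,L)+\log^+\log r\bigr),
$$
with the coefficient $(m+k)/2$ extracted from the local overlap of the logarithmic factors at normal-crossing intersections, which is precisely what the complexity $k$ quantifies. The Ricci integral is bounded below using $\mathrm{Ric}_M\geq R_M(x)\geq (2m-1)\kappa(r)$ on $B_o(r)$ together with the Laplacian-comparison upper bound $\mathbb{E}[\tau_r]=O(r^2)$ valid under non-positive sectional curvature, contributing the term of order $-\kappa(r)r^2$.

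The main technical obstacle will be matching the probabilistic expressions with the geometric Nevanlinna functions defined through Green-function averages on $B_o(r)$: one needs heat-kernel and harmonic-measure comparison estimates, supplied by the non-positive sectional curvature hypothesis, to identify the stochastic integrals with the geodesic-sphere averages that define $T_f(r,L)$, $N_f(r,D)$ and $m_f(r,D)$. A secondary but still delicate point is the bookkeeping that produces the precise coefficient $(m+k)/2$: it requires a careful local analysis at the intersections of the $D_j$'s, ensuring that overlapping logarithmic poles of $\Psi$ contribute exactly as stated and no more, which is where the simple-normal-crossing assumption and the definition of the complexity $k$ are used in an essential way.
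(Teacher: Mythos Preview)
Your overall architecture---singular volume form, Dynkin/It\^o formula, Jensen plus a Borel-type calculus lemma, and the Ricci/exit-time estimate---matches the paper's. But the specific volume form you chose creates a real gap: the Carlson--Griffiths form
\[
\Psi=\frac{c\,\Omega}{\prod_{j}\|s_j\|^2(\log\|s_j\|^2)^2}
\]
will yield a Second Main Theorem with error $O(\log T_f(r,L))$, but it does \emph{not} produce the sharp coefficient $(m+k)/2$. Your sentence ``the coefficient $(m+k)/2$ [is] extracted from the local overlap of the logarithmic factors at normal-crossing intersections'' is not a mechanism, and in fact no such direct extraction is available from $\Psi$. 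The paper instead uses the Wong--Lang parametrized form
\[
\Phi_{D,\lambda}=\frac{\Omega}{\prod_j\|s_j\|^{2(1-\lambda)}},\qquad 0<\lambda<1,
\]
together with Lang's inequality $\lambda^{m+k}\Phi_{D,\lambda}\le b\,\eta_{D,\lambda}^m$ (Lemma~\ref{op}). Writing $f^*\Phi_{D,\lambda}=\xi\,\alpha^m$, one has $\xi^{1/m}\le C\lambda^{-1-k/m}\cdot(\text{something whose }\mathbb E_o\text{-integral is }O(T_f(r,L)))$, so that
\[
\mathbb E_o\!\left[\int_0^{\tau_r}\xi^{1/m}(X_t)\,dt\right]\le B\,T_f(r,L)^{\,1+k/m}
\]
after the substitution $\lambda=1/T_f(r,L)$. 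The Jensen step $\tfrac12\mathbb E_o[\log\xi]\le\tfrac{m}{2}\log\mathbb E_o[\xi^{1/m}]$ then converts $1+k/m$ into exactly $(m+k)/2$. This is where the complexity $k$ enters, via the exponent in Lang's lemma, not via any ad hoc local analysis of overlapping poles of $\Psi$.

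Two smaller points. First, the Ricci contribution is not $\tfrac12\mathbb E\bigl[\int_0^{\tau_r}\mathrm{Ric}_M(\dot X_s,\dot X_s)\,ds\bigr]$---Brownian paths are nowhere differentiable---but rather the term $T(r,\mathscr R_M)=\mathbb E_o\bigl[\int_0^{\tau_r}s_M(X_t)\,dt\bigr]$ coming from $\Delta_M\log\det(g_{i\bar j})$; it is then bounded below via $s_M\ge mR_M\ge m(2m-1)\kappa(r)$ and $\mathbb E_o[\tau_r]\le\frac{2r^2}{2m-1}$, exactly as you indicate in spirit. Second, the passage from the boundary expectation $\mathbb E_o[\xi^{1/m}(X_{\tau_r})]$ to the time-integral $\mathbb E_o[\int_0^{\tau_r}\xi^{1/m}dt]$ is not a second application of concavity but the Calculus Lemma (Theorem~\ref{cal}), which rests on Green-function and harmonic-measure comparisons (Lemmas~\ref{zz}--\ref{sing1}) specific to non-positive curvature; this is also where the term $e^{(2m-1)r\sqrt{-\kappa(r)}}$, and hence the $-\kappa(r)r^2$ in the final estimate, first appears.
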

The term $\kappa(r)$ appeared in Theorem \ref{thm1} depends  on the curvature of $M$.  Consider the simple case where $M=\mathbb C^m,$  we have $\kappa(r)\equiv0$ and $T_f(r,L)\geq O(\log r)$ as $r\rightarrow\infty.$  It yields from Theorem \ref{thm1}  that
  \begin{eqnarray*}
  % \nonumber to remove numbering (before each equation)
     T_f(r,L)+T_f(r,K_V)-N^{[1]}_f(r,D)
    \leq \frac{m+k}{2}\log T_f(r,L)+\text{\emph{Lower order terms}}.
  \end{eqnarray*}
So, Theorem \ref{thm1} implies Theorem A.
Coefficient $(m+k)/2$ before $\log T_f(r,L)$ is optimal. When $m=1,$ we have $k=1$ and $(m+k)/2=1.$ It is mentioned that Ye \cite{Ye} showed  the estimate ``1" is best.

As a consequence of Theorem \ref{thm1}, we  derive a defect relation.
Recall that
the  \emph{defect} (without counting multiplicities) of $f$   with respect to $D$ is defined by
$$\Theta_f(D):=1-\limsup_{r\rightarrow\infty}\frac{N^{[1]}_f(r,D)}{T_f(r,L)}. $$
In general, we set for two  holomorphic line bundles $L_1, L_2$ over $V$ that
$$\overline{\left[\frac{c_1(L_2)}{c_1(L_1)}\right]}:=\inf\left\{s\in\mathbb R: \omega_2<s\omega_1, \ ^\exists \omega_1\in c_1(L_1),
\ ^\exists\omega_2\in c_1(L_2) \right\}.$$

\begin{cor}\label{defect} Assume the same conditions as in Theorem $\ref{thm1}.$
 If $f$ satisfies the growth condition
$$ \liminf_{r\rightarrow\infty}\frac{\kappa(r)r^2}{T_f(r,L)}=0,$$
then
$$\Theta_f(D)\leq
\overline{\left[\frac{c_1(K^*_V)}{c_1(L)}\right]}.$$
\end{cor}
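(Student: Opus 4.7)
The plan is to divide the Second Main Theorem of Theorem~\ref{thm1} through by $T_f(r,L)$, absorb the $T_f(r,K_V)$ term using the definition of $\overline{[c_1(K_V^*)/c_1(L)]}$, and then extract the defect bound by taking $\liminf$ along a sequence on which the curvature correction $\kappa(r)r^2$ is negligible compared to $T_f(r,L)$.

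First, fix an arbitrary $s > \overline{[c_1(K_V^*)/c_1(L)]}$. By the defining infimum there exist representatives $\omega_1 \in c_1(L)$ and $\omega_2 \in c_1(K_V^*)$ with $\omega_2 < s\,\omega_1$ as $(1,1)$-forms on the compact manifold $V$. Because the Nevanlinna characteristic on $M$ depends on the chosen Hermitian metric only through an $O(1)$ correction (standard First Main Theorem bookkeeping on K\"ahler manifolds, applicable since any two metrics on $L$ differ by a globally bounded smooth function pulled back from $V$), this pointwise comparison integrates to
\[
T_f(r,K_V^*) \;\leq\; s\,T_f(r,L) + O(1),
\]
or equivalently $T_f(r,K_V) \geq -s\,T_f(r,L) - O(1)$. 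Substituting into Theorem~\ref{thm1} and dividing by $T_f(r,L)$ gives, for $r>1$ outside some set $E$ of finite Lebesgue measure,
\[
1 - \frac{N_f^{[1]}(r,D)}{T_f(r,L)} \;\leq\; s \;+\; \frac{\frac{m+k}{2}\log T_f(r,L) + O\!\left(\log^+\log T_f(r,L) - \kappa(r)r^2 + \log^+\log r\right) + O(1)}{T_f(r,L)}.
\]

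Next, I would select a sequence $r_n \to \infty$ in $(1,\infty)\setminus E$ along which $\kappa(r_n) r_n^2 / T_f(r_n,L) \to 0$, as guaranteed by the growth hypothesis. Since $f$ is differentiably non-degenerate and $L$ is positive, $T_f(r,L) \to \infty$, so each of $\log T_f/T_f$, $\log^+\log T_f/T_f$, $\log^+\log r / T_f$, $O(1)/T_f$, together with the $\kappa$-contribution, tends to $0$ along $\{r_n\}$. Passing to $\liminf$ and using the identity
\[
\Theta_f(D) = \liminf_{r\to\infty}\left(1 - \frac{N_f^{[1]}(r,D)}{T_f(r,L)}\right)
\]
yields $\Theta_f(D) \leq s$; letting $s$ decrease to $\overline{[c_1(K_V^*)/c_1(L)]}$ completes the proof.

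The main technical point is the inequality $T_f(r,K_V^*) \leq s\,T_f(r,L) + O(1)$. Once Hermitian metrics on $L$ and $K_V^*$ are fixed whose Chern curvature forms equal $\omega_1$ and $\omega_2$ respectively, each characteristic is realized (per Section~2) as an integrated volume average of $f^*\omega_i \wedge \omega_M^{m-1}$ over geodesic balls in $M$, and the positivity of $s\,\omega_1 - \omega_2$ on $V$ transfers directly to the integral on $M$. A change of Hermitian metric replaces the integrand by one differing by $f^*(dd^c u)$ for a bounded smooth $u$ on $V$, and Stokes'/Jensen-type bookkeeping on the geodesic balls of $M$ shows this costs only $O(1)$, safely absorbed into the error on the right.
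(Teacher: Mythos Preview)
Your argument is correct and is exactly the standard derivation the paper has in mind: the corollary is stated immediately after Theorem~\ref{thm1} with the words ``As a consequence of Theorem~\ref{thm1}, we derive a defect relation,'' and no separate proof is supplied, so there is nothing substantive to compare against.

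One small remark that tightens your sequence argument: you do not need to arrange simultaneously that $r_n\notin E$ and that $\kappa(r_n)r_n^2/T_f(r_n,L)\to 0$. Since $M$ has non-positive sectional curvature we have $\kappa\le 0$, so the ratio $\kappa(r)r^2/T_f(r,L)$ is everywhere $\le 0$; the hypothesis $\liminf=0$ therefore forces $\lim=0$. Hence \emph{any} sequence $r_n\to\infty$ avoiding the finite-measure exceptional set $E$ already has the curvature term vanishing in the limit, and the rest of your estimate goes through unchanged.
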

In particular, when $M=\mathbb C^m,$ we  derive Carlson-Griffiths' defect relation.
\section{Basic notations}
\subsection{Brownian motions}~

We first introduce Brownian motions in Riemannian manifolds \cite{13,NN,itoo}  and notions of Nevanlinna's functions,  then we give the First Main Theorem of Nevanlinna theory.

 Let  $(M,g)$ be a Riemannian manifold with  Laplace-Beltrami operator $\Delta_M$ associated to  $g.$  For $x\in M,$ we denote by $B_x(r)$ the geodesic ball centered at $x$ with radius $r,$ and denote by $S_x(r)$ the geodesic sphere centered at $x$ with radius $r.$
 By Sard's theorem, $S_x(r)$ is a submanifold of $M$ for almost every $r>0.$
A Brownian motion $X_t$ in $M$
is a heat diffusion  process  generated by $\frac{1}{2}\Delta_M$ with transition density function $p(t,x,y)$ which is the minimal positive fundamental solution of the  heat equation
  $$\frac{\partial}{\partial t}u(t,x)-\frac{1}{2}\Delta_{M}u(t,x)=0.$$
We denote by $\mathbb P_x$ the law of $X_t$ started at $x\in M$
 and by $\mathbb E_x$ the corresponding expectation with respect to $\mathbb P_x.$

 \noindent\textbf{Co-area formula and Dynkin formula}

  Let $D$  be a bounded domain with   smooth boundary $\partial D$ in $M$.
Fix $x\in D,$  we use $d\pi^{\partial D}_x$ to denote the harmonic measure  on $\partial D$ with respect to $x.$
This measure is a probability measure.
 Set
$$\tau_D:=\inf\big\{t>0:X_t\not\in D\big\}$$
which is a stopping time.
Let $g_D(x,y)$  denote the Green function of $\Delta_M/2$ for  $D$ with a pole at $x$ and Dirichlet boundary condition, namely
$$-\frac{1}{2}\Delta_{M}g_D(x,y)=\delta_x(y), \ y\in D; \ \ g_D(x,y)=0, \ y\in \partial D,$$
where $\delta_x$ is the Dirac function.
 For $\phi\in \mathscr{C}_{\flat}(D)$
 (space of bounded continuous functions on $D$), \emph{co-area formula} \cite{bass} asserts  that
$$ \mathbb{E}_x\left[\int_0^{\tau_D}\phi(X_t)dt\right]=\int_{D}g_{D}(x,y)\phi(y)dV(y).
$$
From Proposition 2.8 in \cite{bass}, we also have the relation of harmonic measures and hitting times that
\begin{equation}\label{hello}
  \mathbb{E}_x\left[\psi(X_{\tau_{D}})\right]=\int_{\partial D}\psi(y)d\pi_x^{\partial D}(y)
\end{equation}
for any $\psi\in\mathscr{C}(\overline{D})$.
Since  the expectation $``\mathbb E_x",$ co-area formula and (\ref{hello}) still work in the case when $\phi$ or $\psi$ has a pluripolar set of singularities.

Let $u\in\mathscr{C}_\flat^2(M)$ (space of bounded $\mathscr{C}^2$-class functions on $M$), we have the famous \emph{It\^o formula} (see \cite{at,13, NN,itoo})
$$u(X_t)-u(x)=B\left(\int_0^t\|\nabla_Mu\|^2(X_s)ds\right)+\frac{1}{2}\int_0^t\Delta_Mu(X_s)dt, \ \ \mathbb P_x-a.s.$$
where $B_t$ is the standard  Brownian motion in $\mathbb R$ and $\nabla_M$ is  gradient operator on $M$.
Take expectation of both sides of the above formula, it  follows \emph{Dynkin formula} (see \cite{at, itoo})
$$ \mathbb E_x[u(X_T)]-u(x)=\frac{1}{2}\mathbb E_x\left[\int_0^T\Delta_Mu(X_t)dt\right]
$$
for a stopping time $T$ such that each term  makes sense.
Noting that  Dynkin formula still holds for  $u\in\mathscr{C}^2(M)$ if $T=\tau_D.$
In further, it also works when $u$ is of a pluripolar set of singularities, particularly, for a plurisubharmonic function $u.$ 
\subsection{Nevanlinna's functions}~

Let $$f: M\rightarrow V$$
 be a holomorphic mapping into a compact complex manifold $V.$
Fix  $o\in M$ as a reference point and denote by $g_r(o,x)$ the Green function of $\Delta_M/2$ for  geodesic ball $B_o(r)$ with a pole at $o$ and   Dirichlet boundary condition.
 For  a (1,1)-form $\varphi$  on $M,$ we use the following convenient notations
 $$e_{\varphi}(x):=2m\frac{\varphi \wedge\alpha^{m-1}}{\alpha^m}, \ \ \ T(r,\varphi):=\frac{1}{2}\int_{B_o(r)}g_r(o,x)e_{\varphi}(x)dV(x),$$
 where $dV$ is the Riemannian volume measure of $M.$
For a (1,1)-form $\omega$  on $N,$ 
the
  \emph{characteristic function} of $f$ with respect to $\omega$ is defined  by
$$ T_f(r,\omega):=T(r,f^*\omega).$$
Let $L\rightarrow V$ be a  holomorphic line bundle equipped with  Hermitian metric $h,$
the associated Chern form of $L$ is $c_1(L,h):=-dd^c\log h.$
We define $$T_f(r, L): =T_f(r, c_1(L, h))$$
up to a bounded term.  A simple computation shows that
$$ e_{f^*c_1(L,h)}
=2m\frac{f^*c_1(L,h)\wedge\alpha^{m-1}}{\alpha^m}=-\frac{1}{2}\Delta_M\log(h\circ f).$$
Set
 $$\tau_r:=\inf\big{\{}t>0: X_t\not\in B_o(r)\big{\}},$$
where $X_t$ is the Brownian motion in $M$ generated by $\Delta_M/2$ started at $o.$
By co-area formula, we have
$$T_f(r,L) =\frac{1}{2}\mathbb E_o\left[\int_0^{\tau_r}e_{f^*c_1(L,h)}(X_t)dt\right].$$
Let $\mathscr R_M:=-dd^c\log\det(g_{i\bar{j}})$ be the Ricci curvature form of $(M,g).$
We define the Ricci curvature term  by
\begin{eqnarray*}
% \nonumber to remove numbering (before each equation)
   T(r,\mathscr{R}_M)&:=&\frac{1}{2}\int_{B_o(r)}g_r(o,x)e_{\mathscr R_M}(x)dV(x) \\
&=& m\mathbb E_o\left[\int_0^{\tau_r}\frac{\mathscr R_M\wedge\alpha^{m-1}}{\alpha^m}(X_t)dt\right] \\
&=& -\frac{1}{4}\mathbb E_o\left[\int_0^{\tau_r}\Delta_M\log\det\big{(}g_{i\bar{j}}(X_t)\big{)}dt\right].
\end{eqnarray*}
Given $D\in|L|,$ an effective divisor such that $s_D\in H^0(N,L),$  where $s_D$ is the
canonical section defined by $D.$
Since $V$ is compact,  assume that $\|s_D\|<1.$
The \emph{proximity function} of $f$ with respect to $D$ is  defined by
$$  m_f(r,D):=\int_{S_o(r)}\log\frac{1}{\|s_D\circ f(x)\|}d\pi_o^r(x),
$$
where $d\pi_o^r$ is the harmonic measure on  $S_o(r)$ with respect to $o.$
The relation between  harmonic measure and hitting time implies that
$$  m_f(r,D) =\mathbb E_o\left[\log\frac{1}{\|s_D\circ f(X_{\tau_r})\|}\right].$$
 The \emph{counting function} of $f$ with respect to $D$ is  defined by
$$N_f(r,D):=\frac{\pi^m}{(m-1)!}\int_{f^*D\cap B_o(r)}g_r(o,x)\alpha^{m-1},$$
where $$\alpha:=\frac{\sqrt{-1}}{2\pi}\sum_{i,j=1}^m g_{i\bar j}dz_i\wedge d\bar z_j$$ is the K\"ahler metric form of $M$ associated to $g.$
Writing $s_D=\tilde{s}_De_\alpha$ locally, where $\{e_\alpha,U_\alpha\}$ is a local holomorphic frame of $(L,h)$ restricted to $U_\alpha.$ Then we have
\begin{eqnarray*}
% \nonumber to remove numbering (before each equation)
   N_f(r,D)&=&\frac{\pi^m}{(m-1)!}\int_{B_o(r)}g_r(o,x)dd^c\log|\tilde{s}_D\circ f|^2\wedge\alpha^{m-1} \\
&=&\frac{1}{4}\int_{B_o(r)}g_r(o,x)\Delta_M\log|\tilde{s}_D\circ f(x)|^2dV(x).
\end{eqnarray*}
\begin{remark}\label{remark} The definitions of Nevanlinna's functions in above are  natural extensions of the classical ones. To see that, we recall the  $\mathbb C^m$ case:
\begin{eqnarray*}
% \nonumber to remove numbering (before each equation)
   T_f(r,L)&=&\int_0^r\frac{dt}{t^{2m-1}}\int_{B_o(t)}f^*c_1(L,h)\wedge\alpha^{m-1}, \\
  m_f(r,D)&=&\int_{S_o(r)}\log\frac{1}{\|s_D\circ f\|}\gamma, \\
 N_f(r,D)&=&\int_0^r\frac{dt}{t^{2m-1}}\int_{B_o(t)}dd^c\log|\tilde{s}_D\circ f|^2\wedge\alpha^{m-1},
\end{eqnarray*}
where $o=(0,\cdots,0)$ and
$$\alpha=dd^c\|z\|^2,\ \ \ \gamma=d^c\log\|z\|^2\wedge \left(dd^c\log\|z\|^2\right)^{m-1}.$$
Note  the facts that
$$\gamma=d\pi_o^r(z),  \ \ \ g_r(o,z)=\left\{
                \begin{array}{ll}
                  \frac{\|z\|^{2-2m}-r^{2-2m}}{(m-1)\omega_{2m-1}}, & m\geq2; \\
                  \frac{1}{\pi}\log\frac{r}{|z|}, & m=1.
                \end{array}
              \right.,$$
where $\omega_{2m-1}$ is the volume of  unit sphere in $\mathbb R^{2m}.$ Apply integration by part, we see that the above expressions agree with  ours.
\end{remark}
The Dynkin formula implies that
\begin{theorem}[FMT]\label{first}  Assume that $f(o)\not\in{\rm{Supp}}D.$ Then
$$T_f(r,L)=m_f(r,D)+N_f(r,D)+O(1).$$
\end{theorem}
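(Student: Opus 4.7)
The strategy is to apply the Dynkin formula to the function
$$u(x):=\log\frac{1}{\|s_D\circ f(x)\|}$$
with the stopping time $T=\tau_r$, and match the three resulting terms with $m_f(r,D)$, $T_f(r,L)$, and $N_f(r,D)$. Note that $u\geq 0$ on $M$ (since $\|s_D\|<1$) and $u(o)=O(1)$ because $f(o)\notin \mathrm{Supp}\, D$. Although $u$ blows up along the (pluripolar) analytic set $f^{-1}(\mathrm{Supp}\, D)$, the version of Dynkin's formula recorded in \S2.1 — which is valid for functions whose singular locus is pluripolar, in particular for negatives of logarithms of norms of holomorphic sections — applies and yields
$$\mathbb E_o[u(X_{\tau_r})]-u(o)=\frac{1}{2}\mathbb E_o\Bigl[\int_0^{\tau_r}\Delta_M u(X_t)\,dt\Bigr].$$

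I would then identify each side in turn. By (\ref{hello}) with $\psi=u$ restricted to $S_o(r)$, the left-hand side equals $m_f(r,D)-u(o)=m_f(r,D)+O(1)$. For the right-hand side, choose a local holomorphic frame $e_\alpha$ of $L$ and write $s_D=\tilde s_D e_\alpha$, so that $\|s_D\|^2=|\tilde s_D|^2 h_\alpha$ and
$$\Delta_M u=-\tfrac{1}{2}\Delta_M\log|\tilde s_D\circ f|^2+e_{f^*c_1(L,h)},$$
via the identity $e_{f^*c_1(L,h)}=-\tfrac{1}{2}\Delta_M\log(h\circ f)$ already established in \S2.2. The locally defined pieces re-combine globally because $e_{f^*c_1(L,h)}$ is a well-defined function on $M$, and the distributional contribution supported on $f^{-1}D$ comes from the globally defined Poincar\'e--Lelong current. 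Plugging into the co-area formula and invoking the explicit integral representations of $N_f(r,D)$ and $T_f(r,L)$ from \S2.2 produces
$$\frac{1}{2}\mathbb E_o\Bigl[\int_0^{\tau_r}\Delta_M u(X_t)\,dt\Bigr]=-N_f(r,D)+T_f(r,L),$$
and combining with the evaluation of the left-hand side yields the claimed identity.

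The one nontrivial point — and the main obstacle — is justifying Dynkin's formula for $u$, since $u\notin\mathscr{C}^2(M)$: it has logarithmic singularities along $f^{-1}(\mathrm{Supp}\, D)$. This is precisely where the extension noted in \S2.1 (Dynkin formula for functions with pluripolar singularities, in particular plurisubharmonic-like functions) is needed; concretely, the Poincar\'e--Lelong equation $dd^c\log|\tilde s_D\circ f|^2=[f^*D]$ ensures that the singular part of $\Delta_M u$ concentrates as a current of integration on $f^*D$ and, after pairing against the Green function $g_r(o,\cdot)$ through the co-area formula, reproduces exactly $N_f(r,D)$. Once this analytic point is granted, the remainder is a formal bookkeeping exercise in the dictionary among $dd^c$, $\Delta_M$, and the trace operator $e_{\varphi}$.
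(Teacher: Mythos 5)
Your proposal is correct and follows exactly the route the paper intends: the paper itself simply states ``The Dynkin formula implies that'' before the FMT, and your argument --- applying Dynkin's formula to $u=\log\frac{1}{\|s_D\circ f\|}$ with stopping time $\tau_r$, splitting $\Delta_M u=-\tfrac12\Delta_M\log|\tilde s_D\circ f|^2+e_{f^*c_1(L,h)}$, and identifying the pieces with $N_f(r,D)$ and $T_f(r,L)$ via the co-area formula --- is precisely the bookkeeping the paper leaves implicit. You also correctly flag the only delicate point (Dynkin/co-area across the pluripolar singular set, handled by Poincar\'e--Lelong), which is the extension already recorded in \S2.1.
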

\section{Calculus Lemma}

Let $M$ be a  simply-connected complete K\"ahler manifold with non-positive sectional curvature.
Let $\kappa$ be defined by (\ref{kappa}), then $\kappa$ is a non-positive, non-increasing and continuous function  on $[0,\infty).$
   Consider the  ODE
 \begin{equation}\label{G}
G''(t)+\kappa(t)G(t)=0;\ \ \ G(0)=0, \ \ G'(0)=1
 \end{equation}
on $[0,\infty).$  Comparing (\ref{G})  with $y''(t)+\kappa(0)y(t)=0$ provided with the same  initial conditions, we see that
 $G$ can be  estimated simply as
$$G(t)=t \ \ \text{for}  \ \kappa\equiv0; \ \ \ G(t)\geq t \ \ \text{for} \ \kappa\not\equiv0.$$
This follows that
\begin{equation}\label{v1}
  G(r)\geq r \ \ \text{for} \ r\geq0; \ \ \ \int_1^r\frac{dt}{G^{2m-1}(t)}\leq\log r \ \ \text{for} \ r\geq1.
\end{equation}
On the other hand, we  rewrite (\ref{G}) in the form
$$\log'G(t)\cdot\log'G'(t)=-\kappa(t).$$
Since $G(t)\geq t$ is increasing,
then the decrease and non-positivity of $\kappa$ imply that for each fixed $t,$ $G$  must be satisfied one of the following two inequalities
$$\log'G(t)\leq\sqrt{-\kappa(t)} \ \ \text{for} \ t>0; \ \ \ \log'G'(t)\leq\sqrt{-\kappa(t)} \ \ \text{for} \ t\geq0.$$
By virtue of $G(t)\rightarrow0$ as $t\rightarrow0,$ by integration, $G$ is bounded from above by
\begin{equation}\label{v2}
  G(r)\leq r\exp\big(r\sqrt{-\kappa(r)}\big) \ \  \text{for} \ r\geq0.
\end{equation}

Before giving the Calculus Lemma, we  introduce some lemmas.
\begin{lemma}[\cite{atsuji}]\label{zz} Let $G(t)$ be defined in {\rm{(\ref{G})}}, and let $\eta>0$ be a constant. Then there exists  a constant $C>0$ such that for
$r>\eta$ and $x\in B_o(r)\setminus \overline{B_o(\eta)},$ we have
  $$g_r(o,x)\int_{\eta}^rG^{1-2m}(t)dt\geq C\int_{r(x)}^rG^{1-2m}(t)dt.$$
\end{lemma}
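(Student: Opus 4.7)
The plan is to construct a radial comparison function $\varphi$ out of $G$, to prove it is subharmonic on the punctured ball $B_o(r)\setminus\{o\}$ via the Laplacian comparison theorem, and then to match $\varphi$ against $g_r(o,\cdot)$ on the annulus $A := B_o(r)\setminus\overline{B_o(\eta)}$ using the maximum principle.

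First I would set
$$\varphi(x) := \int_{r(x)}^{r} G^{1-2m}(t)\, dt, \qquad x \in B_o(r)\setminus\{o\},$$
with $r(x) := d_M(o,x)$. Since $M$ is Cartan--Hadamard, $r$ is smooth on $M\setminus\{o\}$ with $|\nabla r|=1$, so a direct radial computation combined with the chain rule gives
$$\Delta_M\varphi(x) \;=\; (2m-1)\frac{G'(r(x))}{G^{2m}(r(x))}\;-\;G^{1-2m}(r(x))\,\Delta_M r(x).$$
The definition of $\kappa$ in (\ref{kappa}), together with its monotonicity, yields $\mathrm{Ric}_M(x) \geq (2m-1)\kappa(r(x))$, and since $G$ solves (\ref{G}) the standard radial Laplacian comparison theorem provides
$$\Delta_M r(x) \;\leq\; (2m-1)\frac{G'(r(x))}{G(r(x))}.$$
Inserting this and using $G^{1-2m}(r(x)) > 0$ to reverse the sign shows that $\Delta_M\varphi \geq 0$ on $B_o(r)\setminus\{o\}$, i.e.\ $\varphi$ is subharmonic there.

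Next I would invoke the maximum principle on $A$. Write $I_\eta := \int_\eta^{r} G^{1-2m}(t)\,dt$ and $c_r := \min_{\partial B_o(\eta)} g_r(o,\cdot)$, which is strictly positive since $g_r(o,\cdot)$ is harmonic and positive on $B_o(r)\setminus\{o\}$. On $\partial B_o(r)$ one has $g_r(o,\cdot) = \varphi = 0$, while on $\partial B_o(\eta)$ one has $g_r(o,\cdot) \geq c_r$ and $\varphi = I_\eta$. Hence
$$w := g_r(o,\cdot)\,I_\eta \;-\; c_r\,\varphi$$
is superharmonic on $A$ (as $\Delta_M w = -c_r\,\Delta_M\varphi \leq 0$) and non-negative on $\partial A$, so the minimum principle forces $w \geq 0$ on $A$, which is exactly the claimed inequality with $C = c_r$.

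The main obstacle is extracting a constant $C > 0$ uniform in $r$: if $r \downarrow \eta$ then $g_r(o,\cdot)\big|_{\partial B_o(\eta)}$ collapses to zero, and so does $c_r$. The way around this is to fix an $r_0 > \eta$ and exploit the domain monotonicity of Dirichlet Green's functions: for every $r \geq r_0$ one has $g_r(o,\cdot) \geq g_{r_0}(o,\cdot)$ on $B_o(r_0)$, again by the maximum principle. Consequently $c_r \geq \min_{\partial B_o(\eta)} g_{r_0}(o,\cdot) > 0$ for all $r \geq r_0$, giving the desired uniform constant. Since the lemma is applied in the asymptotic regime $r\to\infty$, imposing such an $r \geq r_0$ is harmless.
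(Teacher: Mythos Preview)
The paper does not supply its own proof of this lemma; it is quoted directly from Atsuji \cite{atsuji}. Your argument is the standard one and is correct: build the radial barrier $\varphi(x)=\int_{r(x)}^{r}G^{1-2m}(t)\,dt$, check via the variable-curvature Laplacian comparison $\Delta_M r\le(2m-1)G'/G$ that $\varphi$ is subharmonic on $B_o(r)\setminus\{o\}$, and then run the minimum principle for $w=I_\eta\,g_r(o,\cdot)-c_r\,\varphi$ on the annulus. This is essentially the route taken in Atsuji's paper as well, so there is no methodological divergence to discuss.

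Your handling of the uniformity of $C$ in $r$ is also the right fix: domain monotonicity of the Dirichlet Green function gives $g_r(o,\cdot)\ge g_{r_0}(o,\cdot)$ on $B_o(r_0)$ for $r\ge r_0$, so $c_r\ge\min_{S_o(\eta)}g_{r_0}(o,\cdot)>0$ is uniform once $r\ge r_0>\eta$. Since the lemma is only invoked inside the Calculus Lemma for $r>1$ (and in fact only the large-$r$ regime matters for the Second Main Theorem), restricting to $r\ge r_0$ is harmless, exactly as you say. One could alternatively observe, as the Euclidean model already suggests, that the ratio $g_r(o,\cdot)/\varphi$ on $S_o(\eta)$ does not actually degenerate as $r\downarrow\eta$; but your monotonicity argument is cleaner and entirely sufficient for the applications in this paper.
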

\begin{lemma}[\cite{Deb}]\label{sing1} We have
$$d\pi^r_{o}(x)\leq\frac{1}{\omega_{2m-1}r^{2m-1}}d\sigma_r(x),$$
 where $d\pi_{o}^r(x)$ is the harmonic measure on geodesic sphere $S_o(r)$ with respect to $o\in M,$  $d\sigma_r(x)$ is the induced volume measure on $S_o(r)$ and $\omega_{2m-1}$ is the Euclidean volume of unit sphere in $\mathbb R^{2m}.$
\end{lemma}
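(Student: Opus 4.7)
The plan is a Green-function comparison: build a radial super-solution $h$ on $B_o(r)$ that vanishes on $\partial B_o(r)$ and whose boundary normal derivative equals the Euclidean value, prove by a maximum-principle argument that $g_r(o,\cdot)\leq h$ inside $B_o(r)$, and then read off the claimed pointwise bound on the harmonic measure. The bridge to harmonic measure is the standard identity
$$d\pi_o^r(x)=-\tfrac12\,\frac{\partial g_r(o,x)}{\partial\nu_x}\,d\sigma_r(x),$$
which follows from Green's identity applied to $g_r(o,\cdot)$ and any bounded harmonic function, together with the representation $u(o)=\mathbb{E}_o[u(X_{\tau_r})]=\int_{S_o(r)}u\,d\pi_o^r$ supplied by (\ref{hello}).

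For the comparison function I would take the Euclidean Green's function written radially in $\rho(x):=d(o,x)$, namely
$$h(x):=\frac{1}{(m-1)\omega_{2m-1}}\bigl(\rho(x)^{-(2m-2)}-r^{-(2m-2)}\bigr)$$
(with the logarithmic analogue when $m=1$). Because $M$ is simply connected with non-positive sectional curvature, the Laplacian comparison theorem yields $\Delta_M\rho\geq(2m-1)/\rho$ on $M\setminus\{o\}$. Since $|\nabla_M\rho|\equiv 1$ there, the chain rule together with $h'(\rho)<0$ gives
$$\Delta_M h \;=\; h''(\rho)+h'(\rho)\,\Delta_M\rho \;\leq\; h''(\rho)+\frac{2m-1}{\rho}\,h'(\rho)\;=\;0,$$
so $h$ is superharmonic on $M\setminus\{o\}$, with equality realised in the Euclidean model.

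To upgrade this to the pointwise estimate $g_r(o,\cdot)\leq h$ on $B_o(r)$, a direct maximum principle is obstructed by the matching singularity of the two functions at $o$. I would sidestep this by introducing $u_\varepsilon:=(1+\varepsilon)h-g_r(o,\cdot)$ for $\varepsilon>0$: it is superharmonic on $B_o(r)\setminus\{o\}$ (a non-negative combination of a superharmonic piece and a harmonic piece), vanishes on $\partial B_o(r)$, and diverges to $+\infty$ at $o$ because the leading singularities no longer cancel. Applying the minimum principle on each annulus $B_o(r)\setminus\overline{B_o(\eta)}$ and sending $\eta\to 0^+$ gives $u_\varepsilon\geq 0$ on $B_o(r)\setminus\{o\}$; letting $\varepsilon\to 0^+$ then produces $g_r(o,\cdot)\leq h$. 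Since both functions vanish on $\partial B_o(r)$ with $g_r(o,\cdot)\leq h$ inside, comparing inward normal differences at $y\in S_o(r)$ yields
$$-\frac{\partial g_r(o,y)}{\partial\nu_y}\;\leq\;-\frac{\partial h}{\partial\nu_y}\bigg|_{\rho=r}\;=\;\frac{2}{\omega_{2m-1}r^{2m-1}},$$
which, substituted into the displayed expression for $d\pi_o^r$, delivers the stated inequality. The main obstacle is the common singularity at $o$; the perturbation trick above resolves it cleanly by forcing a strict blow-up that dominates the harmonic term being subtracted.
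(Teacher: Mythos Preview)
The paper does not prove this lemma; it is simply quoted from \cite{Deb}. Your argument is correct and is precisely the classical comparison-geometry proof underlying the cited result: transplant the Euclidean Green's function radially, use the Cartan--Hadamard Laplacian comparison $\Delta_M\rho\geq(2m-1)/\rho$ (together with $h'(\rho)<0$) to make $h$ a supersolution, and compare boundary normal derivatives via the Poisson-kernel identity $d\pi_o^r=-\tfrac12\,\partial_\nu g_r(o,\cdot)\,d\sigma_r$. Two small points worth making explicit: the matched leading asymptotics $g_r(o,x)\sim\frac{1}{(m-1)\omega_{2m-1}}\rho^{2-2m}$ that you invoke for the $\varepsilon$-perturbation is a standard parametrix fact on any Riemannian manifold (so $(1+\varepsilon)h-g_r\to+\infty$ at $o$ is justified), and the absence of a cut locus in the Cartan--Hadamard setting guarantees that $S_o(r)$ is a smooth hypersurface, so elliptic boundary regularity makes the pointwise normal-derivative comparison legitimate.
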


\begin{lemma}[Borel Lemma, \cite{ru}]\label{cal1} Let $T$ be a strictly positive nondecreasing  function of $\mathscr{C}^1$-class on $(0,\infty).$ Let $\gamma>0$ be a number such that $T(\gamma)\geq e,$ and $\phi$ be a strictly positive nondecreasing function such that
$$c_\phi=\int_e^\infty\frac{1}{t\phi(t)}dt<\infty.$$
Then, the inequality
  $T'(r)\leq T(r)\phi(T(r))$
holds for all $r\geq\gamma$ outside a set of Lebesgue measure not exceeding $c_\phi.$ In particular, if take $\phi(t)=\log^{1+\delta}t$ for  $\delta>0,$ then 
  $$T'(r)\leq T(r)\log^{1+\delta}T(r)$$
holds for all $r>0$ outside a set $E_\delta\subset(0,\infty)$ of finite Lebesgue measure.
\end{lemma}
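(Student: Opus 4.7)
The plan is to run the standard Borel growth argument: identify the exceptional set directly, upper bound its measure by turning the offending inequality into a usable integrand, and then reduce to the finiteness hypothesis on $\phi$ via a change of variables driven by $T$ itself.

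\textbf{Step 1 (set up the exceptional set).} I would define
\[
E:=\bigl\{r\geq\gamma:\ T'(r)>T(r)\,\phi(T(r))\bigr\},
\]
which is measurable since $T'$ is continuous. Because $T$ is positive and $\phi$ is positive, the denominator $T(r)\phi(T(r))$ never vanishes, and on $E$ we have $T'(r)/(T(r)\phi(T(r)))>1$. Hence
\[
|E|=\int_{E}dr\ \leq\ \int_{E}\frac{T'(r)}{T(r)\,\phi(T(r))}\,dr\ \leq\ \int_{\gamma}^{\infty}\frac{T'(r)}{T(r)\,\phi(T(r))}\,dr.
\]
Notice that where $T'(r)=0$ the point $r$ does not belong to $E$, so monotonicity of $T$ (not strict increase) is harmless here.

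\textbf{Step 2 (change of variables).} Since $T$ is $\mathscr{C}^1$ and nondecreasing, the fundamental theorem of calculus applied to $H(r):=\Psi(T(r))$, where $\Psi'(u)=1/(u\phi(u))$, gives
\[
\int_{\gamma}^{R}\frac{T'(r)}{T(r)\,\phi(T(r))}\,dr=\Psi(T(R))-\Psi(T(\gamma))=\int_{T(\gamma)}^{T(R)}\frac{du}{u\,\phi(u)}.
\]
Letting $R\to\infty$ and using $T(\gamma)\geq e$ together with positivity of the integrand, we obtain
\[
|E|\ \leq\ \int_{T(\gamma)}^{T(\infty)}\frac{du}{u\,\phi(u)}\ \leq\ \int_{e}^{\infty}\frac{du}{u\,\phi(u)}\ =\ c_{\phi},
\]
which is precisely the bound claimed.

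\textbf{Step 3 (the special case).} For $\phi(t)=\log^{1+\delta}t$ I would compute directly
\[
c_{\phi}=\int_{e}^{\infty}\frac{dt}{t\log^{1+\delta}t}=\Bigl[-\tfrac{1}{\delta}(\log t)^{-\delta}\Bigr]_{e}^{\infty}=\tfrac{1}{\delta}<\infty,
\]
so the general conclusion specializes to the stated one. Choosing any $\gamma$ with $T(\gamma)\geq e$ (which exists as soon as $T$ is eventually $\geq e$) and absorbing the finite interval $(0,\gamma)$ into the exceptional set, one gets the inequality for all $r>0$ outside a set $E_{\delta}$ of finite Lebesgue measure.

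There is no serious obstacle here; the only mild subtlety is making sure the change of variables in Step 2 is justified when $T$ is merely nondecreasing rather than strictly increasing, which is settled by phrasing it as the FTC for the composition $\Psi\circ T$ instead of as an image-set substitution.
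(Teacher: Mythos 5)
Your argument is correct and is the standard proof of Borel's lemma: bound $|E|$ by $\int_E T'/(T\,\phi(T))$, substitute $u=T(r)$, and invoke the finiteness of $c_\phi$. The paper itself gives no proof of this statement --- it is quoted from the cited reference \cite{ru} --- and your write-up reproduces essentially the argument found there, with the FTC phrasing of the substitution being a reasonable way to handle the fact that $T$ is only nondecreasing.
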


We are ready to prove the following so-called Calculus Lemma
\begin{theorem}[Calculus Lemma]\label{cal} Let $\Gamma\geq0$ be a locally integrable  function on $M$ such that it is locally bounded at $o\in M.$
 Then for any $\delta>0,$ there exists a constant $C>0$ independent of $\Gamma,\delta,$ and a set $E_\delta\subset(1,\infty)$ of finite Lebesgue measure such that
\begin{eqnarray*}
\mathbb E_o\big{[}\Gamma(X_{\tau_r})\big{]}
&\leq& \frac{F(\hat{\Gamma},\kappa,\delta)e^{(2m-1)r\sqrt{-\kappa(r)}}\log r}{C\omega_{2m-1}}\mathbb E_o\left[\int_0^{\tau_{r}}\Gamma(X_{t})dt\right]
\end{eqnarray*}
  holds for $r>1$ outside $E_\delta,$  where $\kappa$ is defined by $(\ref{kappa}),$ $\omega_{2m-1}$ is the Euclidean volume of unit sphere in $\mathbb R^{2m}$ and $F$ is defined by
$$F(\hat{\Gamma},\kappa,\delta) =\Big\{\log^+\hat{\Gamma}(r)\cdot\log^+\Big(r^{2m-1}e^{(2m-1)r\sqrt{-\kappa(r)}}\hat{\Gamma}(r)\big(\log^{+}\hat{\Gamma}(r)\big)^{1+\delta}\Big)\Big\}^{1+\delta}
$$
with
$$\hat{\Gamma}(r)=\frac{\log r}{C}\mathbb E_o\left[\int_0^{\tau_{r}}\Gamma(X_{t})dt\right].$$
Moreover, we have the estimate
$$\log F(\hat{\Gamma},\kappa,\delta)
\leq O\Big(\log^+\log \mathbb E_o\left[\int_0^{\tau_{r}}\Gamma(X_{t})dt\right]+\log^+\big(r\sqrt{-\kappa(r)}\big)+\log^+\log r\Big).
$$
\end{theorem}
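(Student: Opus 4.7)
The plan is to reduce the boundary expectation $\mathbb{E}_o[\Gamma(X_{\tau_r})]$ to the Green-potential-weighted volume integral $\mathbb{E}_o[\int_0^{\tau_r}\Gamma(X_t)\,dt]=\int_{B_o(r)}g_r(o,\cdot)\Gamma\,dV$, and then extract the asserted exponential-in-$r\sqrt{-\kappa(r)}$ factor by means of two successive applications of the Borel lemma (Lemma \ref{cal1}). Writing
\[
S(r):=\int_{S_o(r)}\Gamma\,d\sigma_r,\qquad \Psi(r):=\int_{B_o(r)}\Gamma\,dV,\qquad \hat\Psi(r):=\int_{B_o(r)}g_r(o,\cdot)\Gamma\,dV,
\]
the Riemannian co-area formula gives $\Psi'(r)=S(r)$, and combining (\ref{hello}) with Lemma \ref{sing1} immediately yields
\[
\mathbb{E}_o[\Gamma(X_{\tau_r})]=\int_{S_o(r)}\Gamma\,d\pi_o^r\leq\frac{S(r)}{\omega_{2m-1}r^{2m-1}},
\]
so everything comes down to bounding $S(r)$ from above by a controlled multiple of $\hat\Psi(r)$.

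First I would use Lemma \ref{zz} with $\eta=1$, together with the bound $\int_1^rG^{1-2m}(t)\,dt\leq\log r$ from (\ref{v1}), to obtain $g_r(o,x)\geq (C/\log r)\int_{r(x)}^rG^{1-2m}(t)\,dt$ on $B_o(r)\setminus\overline{B_o(1)}$. Multiplying by $\Gamma$, integrating, and using Fubini with the radial co-area identity to rewrite the double integral, one arrives at
\[
\hat\Psi(r)\geq\frac{C}{\log r}\int_1^rG^{1-2m}(t)\bigl(\Psi(t)-\Psi(1)\bigr)\,dt,
\]
so if $K(r):=\int_1^rG^{1-2m}(t)\Psi(t)\,dt$ then $K(r)\leq\hat\Gamma(r)+O(\log r)$. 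A first application of Lemma \ref{cal1} to $K$ gives, outside an exceptional set of finite measure, $G^{1-2m}(r)\Psi(r)=K'(r)\leq K(r)\log^{1+\delta}K(r)$, which rearranges to
\[
\Psi(r)\leq G^{2m-1}(r)\,\hat\Gamma(r)\log^{1+\delta}\hat\Gamma(r)\cdot(1+o(1)).
\]
A second application of Lemma \ref{cal1}, now to $\Psi$, gives $S(r)=\Psi'(r)\leq\Psi(r)\log^{1+\delta}\Psi(r)$, and inserting the previous bound produces $S(r)$ bounded by $G^{2m-1}(r)\,\hat\Gamma(r)$ times exactly the nested $\log^{1+\delta}(\cdots\log^{1+\delta}(\cdots))$ factor that appears inside $F(\hat\Gamma,\kappa,\delta)$.

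Substituting this into the reduction above and invoking $G^{2m-1}(r)\leq r^{2m-1}e^{(2m-1)r\sqrt{-\kappa(r)}}$ from (\ref{v2}) to cancel the $r^{2m-1}$ in the denominator yields the asserted inequality; the $\log r/C$ prefactor is produced by re-packaging $\hat\Gamma(r)=(\log r/C)\hat\Psi(r)$. The supplementary estimate on $\log F$ then follows by applying $\log^+(AB)\leq\log^+A+\log^+B+\log 2$ to each of the two nested $\log^+$ factors in the definition of $F$, with the contribution $\log^+(r\sqrt{-\kappa(r)})$ coming from the exponential factor inside the inner $\log^+$.

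The main obstacle I anticipate is the precise bookkeeping, so that the iterated log factor emerging from the two successive Borel applications matches the particular $F$ written in the statement up to absorbable $(1+o(1))$ corrections, and so that the two exceptional sets generated by the two Borel invocations unite into a single set of finite Lebesgue measure. A minor technical point is that Lemma \ref{cal1} requires $K$ and $\Psi$ eventually to exceed $e$ and to be strictly increasing; if either function stays bounded then the desired inequality holds trivially with a lower-order correction, and the local boundedness of $\Gamma$ at $o$ keeps all integrals meaningful on the small ball $B_o(1)$ where Lemma \ref{zz} is not applied.
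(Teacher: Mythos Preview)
Your proposal is correct and follows essentially the same route as the paper. Indeed, after a change of order of integration the paper's auxiliary function $\Gamma(r)=\int_0^r\big(\int_t^rG^{1-2m}(s)\,ds\big)S(t)\,dt$ coincides (up to the additive constant coming from $B_o(1)$) with your $K(r)=\int_1^rG^{1-2m}(t)\Psi(t)\,dt$, and the paper applies the Borel lemma first to this function and then to $G^{2m-1}(r)\Gamma'(r)=\Psi(r)$, exactly as you do; your treatment is in fact slightly more careful about the contribution of the inner ball $B_o(1)$ and about the hypotheses of Lemma~\ref{cal1}.
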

\begin{proof}
Combining Lemma \ref{zz} with Lemma \ref{sing1} and (\ref{v1}), we obtain 
\begin{eqnarray*}
% \nonumber to remove numbering (before each equation)
\mathbb E_o\left[\int_0^{\tau_{r}}\Gamma(X_{t})dt\right]&=&
   \int_{B_o(r)}g_r(o,x)\Gamma(x)dV(x)  \\
   &=&\int_0^rdt\int_{S_o(t)}g_r(o,x)\Gamma(x)d\sigma_t(x)  \\
&\geq& C\int_0^r\frac{\int_t^rG^{1-2m}(s)ds}{\int_1^rG^{1-2m}(s)ds}dt\int_{S_o(t)}\Gamma(x)d\sigma_t(x) \\
&=& \frac{C}{\log r}\int_0^rdt\int_t^rG^{1-2m}(s)ds\int_{S_o(t)}\Gamma(x)d\sigma_t(x)
\end{eqnarray*}
and
$$ \mathbb E_o\big{[}\Gamma(X_{\tau_r})\big{]}=\int_{S_o(r)}\Gamma(x)d\pi_o^r(x)\leq\frac{1}{\omega_{2m-1}r^{2m-1}}\int_{S_o(r)}\Gamma(x)d\sigma_r(x),$$
where
  $d\sigma_r$ is the induced volume measure  on $S_o(r).$ Thus, we have
 $$\mathbb E_o\left[\int_0^{\tau_{r}}\Gamma(X_{t})dt\right]\geq\frac{C}{\log r}\int_0^rdt\int_t^rG^{1-2m}(s)ds\int_{S_o(t)}\Gamma(x)d\sigma_t(x)$$
 and
\begin{equation}\label{fr}
  \mathbb E_o[\Gamma(X_{\tau_r})]\leq\frac{1}{\omega_{2m-1}r^{2m-1}}\int_{S_o(r)}\Gamma(x)d\sigma_r(x).
\end{equation}
 Put
$$\Gamma(r)=\int_0^rdt\int_t^rG^{1-2m}(s)ds\int_{S_o(t)}\Gamma(x)d\sigma_t(x).$$
Then
$$\Gamma(r)\leq\frac{\log r}{C}\mathbb E_o\left[\int_0^{\tau_{r}}\Gamma(X_{t})dt\right]=\hat{\Gamma}(r).
$$
Since
$$\Gamma'(r)=G^{1-2m}(r)\int_0^rdt\int_{S_o(t)}\Gamma(x)d\sigma_t(x),$$
 it yields from (\ref{fr}) that
\begin{equation}\label{B2}
  \mathbb E_o\big{[}\Gamma(X_{\tau_r})\big{]}\leq\frac{1}{\omega_{2m-1}r^{2m-1}}\frac{d}{dr}\left(\frac{\Gamma'(r)}{G^{1-2m}(r)}\right).
\end{equation}
Using Borel Lemma (Lemma \ref{cal1}) twice, then for any $\delta>0$
\begin{eqnarray*}
% \nonumber to remove numbering (before each equation)
   && \frac{d}{dr}\left(\frac{\Gamma'(r)}{G^{1-2m}(r)}\right) \\
&\leq& G^{2m-1}(r)\Big\{\log^+\Gamma(r)\cdot\log^+\Big(G^{2m-1}(r)\Gamma(r)\big(\log^+\Gamma(r)\big)^{1+\delta}\Big)\Big\}^{1+\delta}\Gamma(r) \nonumber \\
&\leq& G^{2m-1}(r)\Big\{\log^+\hat{\Gamma}(r)\cdot\log^+\Big(G^{2m-1}(r)\hat{\Gamma}(r)\big(\log^+\hat{\Gamma}(r)\big)^{1+\delta}\Big)\Big\}^{1+\delta} \hat{\Gamma}(r) \\
&=& \frac{F(\hat{\Gamma},\kappa,\delta)G^{2m-1}(r)\log r}{C}\mathbb E_o\left[\int_0^{\tau_{r}}\Gamma(X_{t})dt\right]
\end{eqnarray*}
holds outside a set $E_\delta\subset(1,\infty)$ of finite Lebesgue measure.
By this with (\ref{B2}) and (\ref{v2}), we  have the desired inequality. Taking $``\log"$ before $F,$  the estimate is obtained.
\end{proof}

\section{A proof of Theorem \ref{thm1}}
\subsection{Preparations}~

 Let $(M,g)$ be a $m$-dimensional simply-connected   K\"ahler manifold of non-positive sectional curvature.  The associated  K\"ahler  form is defined by
 $$\alpha=\frac{\sqrt{-1}}{2\pi}\sum_{i,j=1}^mg_{i\bar{j}}dz_i\wedge d\bar{z}_j.$$
 Let $\varphi=\frac{\sqrt{-1}}{2\pi}\sum_{i,j=1}^m\varphi_{i\bar{j}}dz_i\wedge d\bar{z}_j$ be a (1,1)-form on $M.$ We use the following convenient symbols
$$\det(\varphi):=\det(\varphi_{i\bar{j}}), \ \ \ T_g(\varphi):=\sum_{i,j=1}^mg^{j\overline{i}}\varphi_{i\bar{j}},$$
where $(g^{i\bar{j}})$ is the inverse of $(g_{i\bar{j}}).$
It is trivial to show that  $T_g(\varphi)$ is  globally  defined on $M.$
\begin{lemma}\label{o1} We have
$$\varphi\wedge\alpha^{m-1}=\frac{1}{m}T_g(\varphi)\alpha^m.$$
\end{lemma}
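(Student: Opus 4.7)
The plan is to verify the identity pointwise, exploiting the fact that both sides are globally defined $(m,m)$-forms which depend $\mathscr{C}^\infty(M)$-linearly on $\varphi$, with $T_g(\varphi)$ itself being a globally defined function (as remarked just before the statement). Therefore, fixing an arbitrary point $p\in M$, it suffices to check the equality at $p$, and we may choose holomorphic normal coordinates $(z_1,\dots,z_m)$ centered at $p$ in which $g_{i\bar j}(p)=\delta_{ij}$ (so also $g^{i\bar j}(p)=\delta_{ij}$). Both $\varphi\wedge\alpha^{m-1}$ and $T_g(\varphi)\alpha^m$ are manifestly tensorial in the matrix $(\varphi_{i\bar j})$, so the problem reduces to a purely algebraic identity on a Hermitian vector space with standard metric.

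In these coordinates $\alpha(p)=\tfrac{\sqrt{-1}}{2\pi}\sum_{i}dz_i\wedge d\bar z_i$. The first step is to compute the standard identities
\[
\alpha^m = m!\left(\tfrac{\sqrt{-1}}{2\pi}\right)^m\bigwedge_{i=1}^m dz_i\wedge d\bar z_i,
\qquad
\alpha^{m-1} = (m-1)!\left(\tfrac{\sqrt{-1}}{2\pi}\right)^{m-1}\sum_{k=1}^m\bigwedge_{i\neq k}dz_i\wedge d\bar z_i,
\]
obtained by expanding the multinomial and observing that the $2$-forms $dz_i\wedge d\bar z_i$ commute and square to zero.

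Next, writing $\varphi(p)=\tfrac{\sqrt{-1}}{2\pi}\sum_{i,j}\varphi_{i\bar j}dz_i\wedge d\bar z_j$ and wedging with $\alpha^{m-1}$, the term $dz_i\wedge d\bar z_j$ contributes nontrivially against $\bigwedge_{l\neq k}dz_l\wedge d\bar z_l$ only when $i=j=k$, because otherwise a factor $dz_l$ or $d\bar z_l$ is repeated. Collecting these surviving diagonal contributions yields
\[
\varphi\wedge\alpha^{m-1} \;=\; (m-1)!\left(\tfrac{\sqrt{-1}}{2\pi}\right)^m\Big(\sum_{k=1}^m\varphi_{k\bar k}\Big)\bigwedge_{i=1}^m dz_i\wedge d\bar z_i \;=\; \tfrac{1}{m}\Big(\sum_{k=1}^m\varphi_{k\bar k}\Big)\alpha^m.
\]
Since $T_g(\varphi)(p)=\sum_{k}g^{k\bar k}(p)\varphi_{k\bar k}(p)=\sum_k\varphi_{k\bar k}(p)$ in the chosen frame, this is exactly $\tfrac{1}{m}T_g(\varphi)\alpha^m$ at $p$, finishing the proof.

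No serious obstacle arises; the only point demanding care is sign bookkeeping in the wedge products, which is handled cleanly by noting that each $dz_i\wedge d\bar z_i$ is of even degree and therefore commutes with the others.
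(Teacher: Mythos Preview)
Your proof is correct. The pointwise reduction to an orthonormal frame, the expansion of $\alpha^{m-1}$, and the observation that only the diagonal terms $i=j=k$ survive are all accurate, and the identification $T_g(\varphi)(p)=\sum_k\varphi_{k\bar k}(p)$ in that frame is exactly the definition.

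The paper argues slightly differently: it stays in an arbitrary holomorphic coordinate system and computes the ratio $\dfrac{\varphi\wedge\alpha^{m-1}}{\alpha^m}$ directly via the cofactor expansion, obtaining
\[
\frac{\varphi\wedge\alpha^{m-1}}{\alpha^m}=\frac{1}{m}\sum_{i,j}\frac{\varphi_{i\bar j}G_{j\bar i}}{\det(g_{s\bar t})}=\frac{1}{m}\sum_{i,j}g^{j\bar i}\varphi_{i\bar j}=\frac{1}{m}T_g(\varphi),
\]
where $(G_{j\bar i})$ are the cofactors of $(g_{s\bar t})$. Your approach trades the adjugate-matrix identity for a choice of adapted coordinates at each point; this makes the combinatorics of the wedge product more transparent (only diagonal terms survive) at the cost of invoking the existence of such coordinates. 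The paper's computation, by contrast, never changes coordinates and shows the identity holds term by term in any chart. Both arguments are equally elementary and of comparable length.
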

\begin{proof} By a direct computation, it follows that
$$\frac{\varphi\wedge\alpha^{m-1}}{\alpha^m} = \frac{1}{m}\sum_{i,j=1}^m\frac{\varphi_{i\bar{j}}G_{j\bar i}}{\det(g_{s\bar{t}})},$$
where
$$G^*=\left(
        \begin{array}{ccc}
          G_{1\bar{1}} & \cdots & G_{m\bar{1}} \\
          \vdots & \ddots & \vdots \\
          G_{1\bar{m}} & \cdots & G_{m\bar{m}} \\
        \end{array}
      \right)$$
is the adjoint matrix of $G=(g_{s\bar{t}}).$ Note
$g^{i\bar{j}}=G_{i\bar{j}}/\det(g_{s\bar t}),$ hence we have
$$\sum_{i,j=1}^m\frac{\varphi_{i\bar{j}}G_{j\bar i}}{\det(g_{s\bar{t}})}
=\sum_{i,j=1}^mg^{j\bar{i}}\varphi_{i\bar{j}}=T_g(\varphi).$$
The proof is completed.
\end{proof}

\begin{lemma}\label{o2} If $\varphi$ is Hermitian semi-positive, then
$$ \left(\frac{\det(\varphi)}{\det(g_{i\overline{j}})}\right)^{\frac{1}{m}}\leq\frac{1}{m}T_g(\varphi).$$
\end{lemma}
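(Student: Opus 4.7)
The plan is to reduce the inequality to the classical AM--GM inequality by simultaneously diagonalizing $g$ and $\varphi$ at a point.

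First I would verify that both sides of the claimed inequality are invariant under a $\mathbb{C}$-linear change of holomorphic coordinates at the given point. Writing $G = (g_{i\bar{j}})$ and $\Phi = (\varphi_{i\bar{j}})$, the ratio $\det(\Phi)/\det(G)$ equals $\det(G^{-1}\Phi)$, and by Lemma \ref{o1} (or by direct inspection) $T_g(\varphi) = \mathrm{tr}(G^{-1}\Phi)$. Under a change of basis by an invertible matrix $P$, $G$ becomes $P^{*}GP$ and $\Phi$ becomes $P^{*}\Phi P$, so $G^{-1}\Phi$ is conjugated to $P^{-1}(G^{-1}\Phi)P$, which preserves both determinant and trace.

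Next, since $G$ is positive definite Hermitian and $\Phi$ is Hermitian semi-positive, I would invoke the standard simultaneous diagonalization result: there exists an invertible $P$ such that $P^{*}GP = I_m$ and $P^{*}\Phi P = \mathrm{diag}(\lambda_1,\dots,\lambda_m)$ with $\lambda_j \geq 0$. In these coordinates
\[
\frac{\det(\varphi)}{\det(g_{i\bar{j}})} = \lambda_1\cdots\lambda_m, \qquad T_g(\varphi) = \lambda_1 + \cdots + \lambda_m.
\]
Then the desired inequality reduces to
\[
(\lambda_1\cdots\lambda_m)^{1/m} \leq \frac{\lambda_1+\cdots+\lambda_m}{m},
\]
which is exactly the arithmetic--geometric mean inequality for non-negative reals.

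I do not anticipate any real obstacle here; the only subtlety worth emphasizing is justifying simultaneous diagonalization for one Hermitian positive-definite and one Hermitian semi-positive matrix (which is standard: diagonalize $G$ first by a unitary, rescale to the identity, then diagonalize the resulting semi-positive matrix by a further unitary). The semi-positivity of $\varphi$ is used precisely to ensure $\lambda_j \geq 0$, so that AM--GM applies in its usual form.
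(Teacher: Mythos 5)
Your proposal is correct and follows essentially the same route as the paper: normalize the metric to the identity at the point (the paper chooses coordinates with $g_{i\bar j}(x)=\delta_{ij}$, which is your $P^*GP=I_m$ step), then apply AM--GM to the non-negative eigenvalues of $\varphi$. The extra care you take about coordinate-invariance and simultaneous diagonalization is a harmless elaboration of the same argument.
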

\begin{proof} Fix $x\in M,$   take  local holomorphic coordinates $z_1\cdots,z_m$ around $x$ such that $g_{i\bar{j}}(x)=\delta_{ij}.$ At $x$, the inequality is equivalent to
$$\left(\det(\varphi)\right)^{\frac{1}{m}}\leq\frac{1}{m}{\rm{tr}}(\varphi)$$
which  holds clearly. In fact, the linear algebra theory asserts that
$$\det(\varphi)=\lambda_1\cdots\lambda_m, \ \ \ {\rm{tr}}(\varphi)=\lambda_1+\cdots+\lambda_m,$$
where $\lambda_1,\cdots,\lambda_m$ are  eigenvalues of $(\varphi_{i\overline{j}}).$
Since $\varphi$ is Hermitian semi-positive, then  $\lambda_1,\cdots,\lambda_m\geq 0.$  
The mean-value inequality implies the lemma.
\end{proof}
\noindent\textbf{Wong-Lang's approach}

Let $V$ be a complex projective manifold and let $L\rightarrow V$ be a positive line bundle.
Let a reduced divisor
$D\in |L|$ be of simple normal crossing type, we write $D=\sum_{j=1}^q D_j$ as the union of irreducible components, i.e., $D_1,\cdots,D_q$ are
irreducible and non-singular, moreover, at each point $x$ of $V$ there exists a local holomorphic coordinate neighborhood  $U(z_1,\cdots,z_m)$ of $x$ such that
$$D\cap U=\big\{z_1\cdots z_{k_x}=0\big\}, \ \ \ 0\leq k_x\leq m.$$
If $k_x=0,$ then it means that $D\cap U=\emptyset.$ Set
\begin{equation}\label{com}
k:=\max_{x\in V} k_x,
\end{equation}
which is called the \emph{complexity} of $D.$
Denote by $s_j(1\leq j\leq q)$ the canonical section defined by $D_j.$ Clearly, $s_D=s_1\otimes\cdots\otimes s_q$ gives the canonical section defined by $D.$ Endowing $L_{D_j}(1\leq j\leq q)$ with a Hermitian metric $h_j,$ which induces a natural Hermitian metric $h$ on $L.$ Since $L>0,$ one may assume that $c_1(L,h)=-dd^c\log h>0.$
Define the singular volume form
\begin{equation}\label{sing}
  \Phi_{D,\lambda}:=\frac{\Omega}{\prod_{j=1}^q\|s_j\|^{2(1-\lambda)}}, \ \ \ \Omega=(-dd^c\log h)^m
\end{equation}
on  $V,$ where  $0<\lambda<1$ is a constant.
Set
$$\eta_{D,\lambda}:=(1+q)\lambda c_1(L,h)+\sum_{j=1}^qdd^c\log(1+\|s_j\|^{2\lambda}).$$
\ \ \ \ Lang proved that
\begin{lemma}[Lemma 7.4, \cite{Lang}]\label{op} There exists a number $b>0$ depending only on $D,$ $\Omega$ and $c_1(L,h)$ such that
  $$\lambda^{m+k}\Phi_{D,\lambda}\leq b\eta_{D,\lambda}^m.$$
\end{lemma}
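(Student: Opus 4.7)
The plan is to combine an explicit pointwise calculation of $dd^{c}\log(1+\|s_{j}\|^{2\lambda})$ with the local structure of the simple normal crossing divisor, and then to isolate the dominant wedge in the multinomial expansion of $\eta_{D,\lambda}^{m}$. First, setting $u_{j}=\|s_{j}\|^{2\lambda}$ and applying the calculus identity $dd^{c}\log(1+u)=\tfrac{dd^{c}u}{1+u}-\tfrac{du\wedge d^{c}u}{(1+u)^{2}}$, a short computation yields
\begin{equation*}
dd^{c}\log(1+\|s_{j}\|^{2\lambda})
\;=\;-\lambda\,\tfrac{u_{j}}{1+u_{j}}\,c_{1}(L_{D_{j}},h_{j})
\;+\;\lambda^{2}\beta_{j},\qquad \beta_{j}:=\tfrac{u_{j}}{(1+u_{j})^{2}}\,d\log\|s_{j}\|^{2}\wedge d^{c}\log\|s_{j}\|^{2}\geq 0.
\end{equation*}
By compactness of $V$ each $c_{1}(L_{D_{j}},h_{j})$ is pointwise dominated in absolute value by a fixed multiple of $c_{1}(L,h)$; after a compatible normalization of the metrics (only affecting the final $b$), the negative pieces are absorbed into $(1+q)\lambda c_{1}(L,h)$ and I obtain
\begin{equation*}
\eta_{D,\lambda}\;\geq\;c_{0}\,\lambda\,c_{1}(L,h)\;+\;\lambda^{2}\sum_{j=1}^{q}\beta_{j},\qquad c_{0}>0,
\end{equation*}
with $c_{0}$ depending only on $D,\Omega,c_{1}(L,h)$.

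Next, I would work in an adapted holomorphic chart $(z_{1},\dots,z_{m})$ around a point $x\in V$ in which $D\cap U=\{z_{1}\cdots z_{k_{x}}=0\}$. Writing $s_{j}=z_{j}\tilde e_{j}$ and $a_{j}=\|\tilde e_{j}\|^{2}>0$ for $j\leq k_{x}$, the singular part of $\beta_{j}$ is a bounded positive multiple of
\begin{equation*}
\frac{a_{j}^{\lambda}|z_{j}|^{2\lambda}}{\bigl(1+a_{j}^{\lambda}|z_{j}|^{2\lambda}\bigr)^{2}}\,\frac{\sqrt{-1}}{2\pi}\,\frac{dz_{j}\wedge d\bar z_{j}}{|z_{j}|^{2}},
\end{equation*}
which carries precisely the $|z_{j}|^{-2(1-\lambda)}$ weight appearing in $\Phi_{D,\lambda}$; for $j>k_{x}$ the form $\beta_{j}$ is smooth on $U$.

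Then I expand $\eta_{D,\lambda}^{m}$ by the multinomial theorem. Every summand is non-negative by the previous step, so I discard all but the single wedge $\beta_{1}\wedge\cdots\wedge\beta_{k_{x}}\wedge c_{1}(L,h)^{m-k_{x}}$, whose numerical coefficient is exactly $\lambda^{2k_{x}}\cdot\lambda^{m-k_{x}}=\lambda^{m+k_{x}}$. In the chart this wedge is a bounded positive multiple of
\begin{equation*}
\prod_{j\leq k_{x}}\frac{(\sqrt{-1}/2\pi)\,dz_{j}\wedge d\bar z_{j}}{|z_{j}|^{2(1-\lambda)}}\ \wedge\ \prod_{i>k_{x}}\frac{\sqrt{-1}}{2\pi}\,dz_{i}\wedge d\bar z_{i},
\end{equation*}
and the latter coincides with $\Phi_{D,\lambda}$ up to a smooth bounded factor, since $\Omega$ is a smooth positive volume form and $\prod_{j>k_{x}}\|s_{j}\|^{2(1-\lambda)}$ is bounded away from zero on $U$. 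A finite cover of $V$ and the choice $k=\max_{x\in V}k_{x}$ yield a single constant $b$ valid on all of $V$.

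The main obstacle is the bookkeeping at two points. First, I must ensure that the negative contributions $\lambda\tfrac{u_{j}}{1+u_{j}}c_{1}(L_{D_{j}},h_{j})$ are \emph{uniformly} dominated by $(1+q)\lambda\,c_{1}(L,h)$ on all of $V$; this requires a compatible normalization of the metrics $h,h_{1},\dots,h_{q}$ and an essential use of compactness, but never touches the exponent of $\lambda$. Second, the remaining mixed wedges $\beta_{i_{1}}\wedge\cdots\wedge\beta_{i_{\ell}}\wedge c_{1}(L,h)^{m-\ell}$ in the expansion must be shown harmless: when $\ell<k_{x}$ they simply do not produce enough $|z_{j}|^{-2}$ poles to match $\Phi_{D,\lambda}$ and so are absorbed into $b$, whereas for $\ell>k_{x}$ they carry strictly higher powers of $\lambda<1$ and therefore cannot erase the $\lambda^{m+k}$ from the selected term. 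Once these uniformity statements are pinned down, the multinomial selection immediately delivers the clean exponent $m+k$.
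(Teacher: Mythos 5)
The paper offers no proof of this statement: it is quoted directly from Lang--Cherry \cite{Lang} (their Lemma 7.4), so the only comparison available is with the standard literature proof, which your argument essentially reproduces. The identity $dd^c\log(1+\|s_j\|^{2\lambda})=-\lambda\tfrac{u_j}{1+u_j}c_1(L_{D_j},h_j)+\lambda^2\beta_j$ with $\beta_j=\tfrac{u_j}{(1+u_j)^2}\,d\log\|s_j\|^2\wedge d^c\log\|s_j\|^2\geq0$ is correct; discarding all but the single non-negative wedge $c_1(L,h)^{m-k_x}\wedge\beta_1\wedge\cdots\wedge\beta_{k_x}$ is exactly the right move (so the closing discussion about the other terms being ``harmless'' is unnecessary --- non-negativity already lets you drop them); the power count $\lambda^{m-k_x}\cdot\lambda^{2k_x}=\lambda^{m+k_x}\geq\lambda^{m+k}$ for $\lambda<1$ gives the exponent; and the local comparison with $\Phi_{D,\lambda}$ is uniform in $\lambda$ because $u_j/(1+u_j)^2\geq u_j/4$ and the factors $a_j^{\pm\lambda}$ are bounded above and below independently of $\lambda\in(0,1)$. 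A finite subcover then yields $b$.

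The one step you assert rather than prove is $\eta_{D,\lambda}\geq c_0\lambda\,c_1(L,h)+\lambda^2\sum_j\beta_j$ with $c_0>0$, and the justification you offer --- ``a compatible normalization of the metrics'' --- does not do the job: multiplying $h_j$ by a constant leaves $c_1(L_{D_j},h_j)$ unchanged, and shrinking $\|s_j\|$ does not help either, since $u_j/(1+u_j)\to1/2$ as $\lambda\to0^+$. What is actually required is the pointwise bound $\sum_j\tfrac{u_j}{1+u_j}\,c_1(L_{D_j},h_j)\leq(1+q-c_0)\,c_1(L,h)$; via $u_j/(1+u_j)<1/2$ this reduces to $c_1(L_{D_j},h_j)\leq\tfrac{2(1+q)}{q}c_1(L,h)$ for each $j$, which compactness gives with \emph{some} constant $A$ in place of $\tfrac{2(1+q)}{q}$ but not necessarily one compatible with the coefficient $(1+q)$ appearing in the paper's definition of $\eta_{D,\lambda}$. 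This is exactly the point where Lang--Cherry's formulation builds in a constant chosen large enough in terms of $D$, $\Omega$ and $c_1(L,h)$ (or an explicit hypothesis on the metrics $h_1,\dots,h_q$). State and justify that domination --- or note that the coefficient in $\eta_{D,\lambda}$ must be taken accordingly --- and your argument is complete.
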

\noindent\textbf{Estimate of $\log F(\hat{\Gamma},\kappa,\delta)$ with  $\Gamma=\xi^{1/m}$}

Let  $f:M\rightarrow V$ be a differentiably non-degenerate equi-dimensional holomorphic mapping, i.e., the differential $df$ has rank $m$ at a  point of $M.$
Write
\begin{equation}\label{defii1}
  \Omega=a(w)\bigwedge_{j=1}^m\frac{\sqrt{-1}}{2\pi}dw_j\wedge d\bar{w}_j
\end{equation}
in a local holomorphic  coordinate system $w.$  It follows that
$$f^*\Omega=a(f)|J(f)|^2\bigwedge_{j=1}^m\frac{\sqrt{-1}}{2\pi}dz_j\wedge d\bar{z}_j$$
in a local holomorphic  coordinate system $z$ of $M,$ where $J(f)$ is the Jacobian determinant of $f.$ Clearly, the zero divisor $(J(f))$ is globally  defined. In fact, for $x\in M$, given two local holomorphic  coordinate systems $z,\tilde{z}$ near $x$ and  two  local holomorphic  coordinate systems $w,\tilde{w}$ near $f(x),$ then 
\begin{eqnarray*}
% \nonumber to remove numbering (before each equatio
J(f(z))&=&\left|\frac{\partial(w_1,\cdots,w_m)}{\partial(z_1,\cdots,z_m)}\right| \\
&=&\left|\frac{\partial(w_1,\cdots,w_m)}{\partial(\tilde{w}_1,\cdots,\tilde{w}_m)}\right|
\left|\frac{\partial(\tilde{w}_1,\cdots,\tilde{w}_m)}{\partial(\tilde{z}_1,\cdots,\tilde{z}_m)}\right|
\left|\frac{\partial(\tilde{z}_1,\cdots,\tilde{z}_m)}{\partial(z_1,\cdots,z_m)}\right| \\
&=&\left|\frac{\partial(w_1,\cdots,w_m)}{\partial(\tilde{w}_1,\cdots,\tilde{w}_m)}\right|J(f(\tilde{z}))
\left|\frac{\partial(\tilde{z}_1,\cdots,\tilde{z}_m)}{\partial(z_1,\cdots,z_m)}\right|.
\end{eqnarray*}
 We use ${\rm{Ram}}_f$ to denote  $(J(f)),$ called the ramification  divisor  of $f.$

\begin{lemma}\label{}
  Set
$f^*\Phi_{D,\lambda}=\xi\alpha^m,$
 where $\Phi_{D,\lambda}$ is defined by $(\ref{sing})$. Then
$$\xi^{\frac{1}{m}}\leq \frac{(q+1)b^{\frac{1}{m}}}{2m\lambda^{\frac{k}{m}}}e_{f^*c_1(L,h)}+\frac{b^{\frac{1}{m}}}{4m\lambda^{1+\frac{k}{m}}}\sum_{j=1}^q\Delta_M
\log\big{(}1+\|s_j\circ f\|^{2\lambda}\big{)}.$$
\end{lemma}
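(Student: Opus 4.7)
The plan is to pull back Lang's key inequality (Lemma \ref{op}) via $f$ and then use the linear-algebra Lemmas \ref{o1} and \ref{o2} to extract an $m$-th root on both sides, converting the resulting trace into the Laplacian form that appears in the statement.

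First I would observe that Lemma \ref{op} gives $\lambda^{m+k}\Phi_{D,\lambda}\leq b\,\eta_{D,\lambda}^m$ on $V$. Pulling back by $f$ and using $f^*\Phi_{D,\lambda}=\xi\alpha^m$ yields
\[
\lambda^{m+k}\xi\,\alpha^m\leq b\,(f^*\eta_{D,\lambda})^m.
\]
Dividing by $\alpha^m$ and taking $m$-th roots gives
\[
\xi^{1/m}\leq\frac{b^{1/m}}{\lambda^{\,1+k/m}}\left(\frac{(f^*\eta_{D,\lambda})^m}{\alpha^m}\right)^{\!1/m}.
\]
Next, since $(1+q)\lambda\,c_1(L,h)$ is strictly positive and dominates the (bounded) contribution of $\sum_j dd^c\log(1+\|s_j\|^{2\lambda})$ for the standard Carlson--Griffiths--Lang choice of metric, $\eta_{D,\lambda}$ is Hermitian semi-positive on $V$; hence so is $f^*\eta_{D,\lambda}$. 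This puts me in position to apply Lemma \ref{o2}, yielding
\[
\left(\frac{(f^*\eta_{D,\lambda})^m}{\alpha^m}\right)^{\!1/m}=\left(\frac{\det(f^*\eta_{D,\lambda})}{\det(g_{i\bar j})}\right)^{\!1/m}\leq\frac{1}{m}\,T_g(f^*\eta_{D,\lambda}).
\]

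Then I would translate $T_g$ into the Laplacian language. By Lemma \ref{o1} the general identity $e_\varphi=2T_g(\varphi)$ holds, so $T_g(\varphi)=\tfrac12 e_\varphi$; in particular, the formula $e_{f^*c_1(L,h)}=-\tfrac12\Delta_M\log(h\circ f)$ quoted in Section 2 gives $e_{dd^c u}=\tfrac12\Delta_M u$ for any smooth $u$, and hence $T_g(dd^c u)=\tfrac14\Delta_M u$. Applied termwise to
\[
f^*\eta_{D,\lambda}=(1+q)\lambda\,f^*c_1(L,h)+\sum_{j=1}^q dd^c\log\big(1+\|s_j\circ f\|^{2\lambda}\big),
\]
this produces
\[
T_g(f^*\eta_{D,\lambda})=\frac{(1+q)\lambda}{2}\,e_{f^*c_1(L,h)}+\frac{1}{4}\sum_{j=1}^q\Delta_M\log\big(1+\|s_j\circ f\|^{2\lambda}\big).
\]

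Finally, substituting this into the earlier bound and simplifying the powers of $\lambda$ gives exactly the claimed inequality: the $\lambda$ from $(1+q)\lambda$ cancels one power in the denominator $\lambda^{1+k/m}$, leaving $\lambda^{k/m}$ in the first term, while the second term keeps $\lambda^{1+k/m}$. The only subtle point I expect is the semi-positivity of $\eta_{D,\lambda}$ used to justify Lemma \ref{o2}; this is a standard feature of the singular volume-form construction, but it should be stated explicitly (or one should note that it suffices at points where the determinant computation is nontrivial, handling the pluripolar locus by continuity as $f^*\eta_{D,\lambda}$ has only a pluripolar set of singularities).
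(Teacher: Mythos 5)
Your proposal is correct and follows essentially the same route as the paper: pull back Lang's inequality (Lemma \ref{op}), take $m$-th roots, apply the determinant--trace inequality of Lemma \ref{o2}, and convert $T_g(f^*\eta_{D,\lambda})$ into the $e_{f^*c_1(L,h)}$ and Laplacian terms via Lemma \ref{o1}. Your explicit flagging of the semi-positivity of $\eta_{D,\lambda}$ (needed to invoke Lemma \ref{o2}) is a point the paper passes over silently, so no gap is introduced relative to the original argument.
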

\begin{proof} By Lemma \ref{o1}-Lemma \ref{op}, we directly compute that
\begin{eqnarray*}
\xi^{\frac{1}{m}}
&\leq& \frac{b^{\frac{1}{m}}}{\lambda^{1+\frac{k}{m}}}\left(\frac{f^*\eta^m_{D,\lambda}}{\alpha^m}\right)^{1\over m} \\
&=& \frac{b^{\frac{1}{m}}}{\lambda^{1+\frac{k}{m}}}\left(\frac{\det(f^*\eta_{D,\lambda})}{\det(g_{i\bar{j}})}\right)^{1\over m} \\
&\leq& \frac{b^{\frac{1}{m}}}{m\lambda^{1+\frac{k}{m}}}T_g(f^*\eta_{D,\lambda}) 
=  \frac{b^{\frac{1}{m}}}{\lambda^{1+\frac{k}{m}}}\frac{f^*\eta_{D,\lambda}\wedge\alpha^{m-1}}{\alpha^m} \\
&=& \frac{b^{\frac{1}{m}}}{\lambda^{1+\frac{k}{m}}}\frac{\left((q+1)\lambda f^*c_1(L,h)+\sum_{j=1}^qdd^c\log\big{(}1+\|s_j\circ f\|^{2\lambda}\big{)}\right)\wedge\alpha^{m-1}}{\alpha^m} \\
&=& \frac{(q+1)b^{\frac{1}{m}}}{2m\lambda^{\frac{k}{m}}}e_{f^*c_1(L,h)}+\frac{b^{\frac{1}{m}}}{4m\lambda^{1+\frac{k}{m}}}\sum_{j=1}^q\Delta_M
\log\big{(}1+\|s_j\circ f\|^{2\lambda}\big{)},
\end{eqnarray*}
where $b>0$ is a suitable number independent of $\lambda.$
\end{proof}
\begin{lemma}\label{oo} There exists a  number $b>0$ independent of $\lambda$ such that
  \begin{eqnarray*}
% \nonumber to remove numbering (before each equation)
  \mathbb E_o\left[\int_0^{\tau_r}\xi^{\frac{1}{m}}(X_t)dt\right]
  &\leq& \frac{b^{\frac{1}{m}}}{m\lambda^{\frac{k}{m}}}\Big((q+1)T_f(r,L)+\frac{q\log2}{2\lambda}\Big)
\end{eqnarray*}
holds for any constant $0<\lambda<1.$ Moreover, $\lambda$ can be replaced by a function $\kappa$ satisfying $0<\kappa(r)\leq c_0<1.$ If take  $\kappa(r)=1/T_f(r,L),$
then there exists a number $B>0$ such that
$$\mathbb E_o\left[\int_0^{\tau_r}\xi^{\frac{1}{m}}(X_t)dt\right]\leq BT^{1+{k\over m}}_f(r,L)$$
for $r>0$ large enough, where
$$B\geq \Big(1+q+\frac{q\log2}{2}\Big)\frac{b^{\frac{1}{m}}}{m}.$$
\end{lemma}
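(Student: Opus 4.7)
The plan is to integrate the pointwise bound supplied by the previous lemma against $\mathbb{E}_o[\int_0^{\tau_r}(\cdot)\,dt]$ term by term. The first summand on the right-hand side of that bound will recover $T_f(r,L)$ directly from its definition as a Brownian functional; the second summand, a sum of Laplacians of bounded functions, will collapse under Dynkin's formula to a quantity bounded by $\log 2$ because the normalization $\|s_j\|<1$ forces each logarithm to stay below $\log 2$.

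Concretely, from the identity
\[
T_f(r,L) \;=\; \frac{1}{2}\,\mathbb E_o\!\left[\int_0^{\tau_r} e_{f^*c_1(L,h)}(X_t)\,dt\right]
\]
recorded in Section~2, the curvature piece immediately contributes $\tfrac{(q+1)b^{1/m}}{m\lambda^{k/m}}T_f(r,L)$. For each $j$-th Laplacian piece I would apply Dynkin's formula to the bounded function $u_j:=\log\bigl(1+\|s_j\circ f\|^{2\lambda}\bigr)$ and the stopping time $\tau_r$, obtaining
\[
\mathbb E_o\!\left[\int_0^{\tau_r} \Delta_M u_j(X_t)\,dt\right] \;=\; 2\bigl(\mathbb E_o[u_j(X_{\tau_r})]-u_j(o)\bigr) \;\leq\; 2\log 2,
\]
where the last inequality uses $0\le u_j\le\log 2$. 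Summing over $j$ and multiplying by the prefactor $b^{1/m}/(4m\lambda^{1+k/m})$ produces the $\tfrac{qb^{1/m}\log 2}{2m\lambda^{1+k/m}}$ contribution, which assembles with the curvature term (after factoring out $b^{1/m}/(m\lambda^{k/m})$) into the first claimed inequality.

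For the second assertion, nothing in the fixed-$r$ computation is disturbed if $\lambda$ is allowed to depend on $r$: the prefactors from the previous lemma are pointwise constants in $t$, and Dynkin's formula was applied for a fixed exponent. So the inequality remains valid when $\lambda$ is replaced by any function $\kappa(r)\in(0,c_0]\subset(0,1)$. Taking $\kappa(r)=1/T_f(r,L)$, which lies in $(0,1)$ for $r$ large enough by differentiable nondegeneracy (which forces $T_f(r,L)\to\infty$), the two contributions become proportional to $T_f(r,L)^{1+k/m}$ with combined coefficient $\tfrac{b^{1/m}}{m}\bigl(1+q+\tfrac{q\log 2}{2}\bigr)=B$, as required.

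The only non-routine point is that $u_j$ fails to be $\mathscr{C}^2$ on $\{s_j\circ f=0\}$, so classical Dynkin does not apply verbatim. However $u_j$ is bounded and smooth off this pluripolar set, so I would invoke the extension of Dynkin's formula recorded at the end of Section~2 (which the paper explicitly states remains valid under pluripolar singularities). Apart from this single regularity subtlety, the proof is direct substitution.
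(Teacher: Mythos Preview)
Your proposal is correct and follows essentially the same route as the paper: integrate the pointwise bound from the preceding lemma, identify the curvature integral with $2T_f(r,L)$, apply Dynkin's formula to each $u_j=\log(1+\|s_j\circ f\|^{2\lambda})$ to bound the Laplacian term by $2\log 2$, and then substitute $\lambda=1/T_f(r,L)$. Your explicit remark on the pluripolar singularities of $u_j$ and the extended Dynkin formula is a point the paper leaves implicit here, so your write-up is if anything slightly more careful.
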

\begin{proof} Using Lemma \ref{oo}, we obtain
\begin{eqnarray*}
% \nonumber to remove numbering (before each equation)
  \mathbb E_o\left[\int_0^{\tau_r}\xi^{\frac{1}{m}}(X_t)dt\right]
  &\leq& \frac{(q+1)b^{\frac{1}{m}}}{2m\lambda^{\frac{k}{m}}}\mathbb E_o\left[\int_0^{\tau_r}e_{f^*c_1(L,h)}(X_t)dt\right] \\
&&+\frac{b^{\frac{1}{m}}}{4m\lambda^{1+\frac{k}{m}}}\sum_{j=1}^q\mathbb E_o\left[\int_0^{\tau_r}\Delta_M
\log\big{(}1+\|s_j\circ f\|^{2\lambda}\big{)}(X_t)dt\right],
\end{eqnarray*}
where $b$ is independent of $\lambda$ with $0<\lambda<1.$ Observing that
$$\mathbb E_o\left[\int_0^{\tau_r}e_{f^*c_1(L,h)}(X_t)dt\right]=2T_f(r,L)$$
and  Dynkin formula implies
\begin{eqnarray*}
&&\mathbb E_o\left[\int_0^{\tau_r}\Delta_M
\log\big{(}1+\|s_j\circ f(X_t)\|^{2\lambda}\big{)}dt\right]\\
&=& 2\mathbb E_o\left[\log\big{(}1+\|s_j\circ f(X_{\tau_r})\|^{2\lambda}\big{)}\right]-
2\log\big{(}1+\|s_j\circ f(o)\|^{2\lambda}\big{)} \\
&<& 2\log2
\end{eqnarray*}
since the assumption that $\|s_j\|<1$ for $1\leq j\leq q.$
Thus, we conclude that
\begin{eqnarray*}
% \nonumber to remove numbering (before each equation)
  \mathbb E_o\left[\int_0^{\tau_r}\xi^{\frac{1}{m}}(X_t)dt\right]
  &\leq& \frac{b^{\frac{1}{m}}}{m\lambda^{\frac{k}{m}}}\Big((q+1)T_f(r,L)+\frac{q\log2}{2\lambda}\Big).
\end{eqnarray*}
The independence of $b$ from $\lambda$ implies that  $\lambda$ could  be replaced by a function $\kappa$ satisfying $0<\kappa(r)\leq c_0<1.$ Since $f$ is non-degenerate, then we have that $T_f(r,L)>1$ when $r$ is large enough. Replacing $\lambda$ by $1/T_f(r,L),$ we conclude that
$$\mathbb E_o\left[\int_0^{\tau_r}\xi^{\frac{1}{m}}(X_t)dt\right]\leq \left(1+q+\frac{q}{2}\log2\right)\frac{b^{\frac{1}{m}}}{m}T^{1+{k\over m}}_f(r,L)$$
for $r>1$ large enough. The proof is completed.
\end{proof}

\begin{lemma}\label{last} Set $\Gamma=\xi^{\frac{1}{m}},$ we have
$$\log F(\hat{\Gamma},\kappa,\delta)
\leq
 O\Big(\log^+\log T_f(r,L)+\log^+\big(r\sqrt{-\kappa(r)}\big)+\log^+\log r\Big). $$
holds for $r>1$ large enough, where $F$ is defined in Lemma $\ref{cal}.$
\end{lemma}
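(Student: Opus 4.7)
The plan is a direct substitution: Theorem \ref{cal} already gives the bound
$$\log F(\hat{\Gamma},\kappa,\delta) \leq O\Big(\log^+\log \mathbb E_o\left[\int_0^{\tau_{r}}\Gamma(X_{t})dt\right]+\log^+\big(r\sqrt{-\kappa(r)}\big)+\log^+\log r\Big),$$
so the only task is to show that the first term on the right-hand side is $O(\log^+\log T_f(r,L))$ when $\Gamma=\xi^{1/m}$. Everything else in the stated inequality is already present verbatim in Theorem \ref{cal}.

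To handle that first term, I would invoke Lemma \ref{oo}, which is precisely tailored to $\Gamma=\xi^{1/m}$ and gives, for all sufficiently large $r$,
$$\mathbb E_o\left[\int_0^{\tau_r}\xi^{\frac{1}{m}}(X_t)dt\right]\leq B\,T_f(r,L)^{1+\frac{k}{m}},$$
where $B$ is a constant independent of $r$. Since $f$ is non-degenerate, $T_f(r,L)\to\infty$, so for $r$ large we may apply $\log$ to both sides:
$$\log \mathbb E_o\left[\int_0^{\tau_r}\xi^{\frac{1}{m}}(X_t)dt\right]\leq \log B + \Big(1+\frac{k}{m}\Big)\log T_f(r,L).$$
Applying $\log^+$ once more and absorbing the additive constant, this is bounded by $\log^+\log T_f(r,L)+O(1)$, which is exactly $O(\log^+\log T_f(r,L))$.

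Plugging this estimate back into the conclusion of Theorem \ref{cal} yields
$$\log F(\hat{\Gamma},\kappa,\delta)\leq O\Big(\log^+\log T_f(r,L)+\log^+\big(r\sqrt{-\kappa(r)}\big)+\log^+\log r\Big),$$
as desired. There is no real obstacle here: the lemma is essentially a bookkeeping step combining two previously established inequalities, and the only point to verify is that passing from $\log^+\log(BT_f^{1+k/m})$ to $\log^+\log T_f$ costs only an additive $O(1)$, which is absorbed into the big-$O$.
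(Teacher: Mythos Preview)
Your proposal is correct and follows essentially the same approach as the paper's proof: invoke the general estimate for $\log F(\hat{\Gamma},\kappa,\delta)$ from the Calculus Lemma (Theorem~\ref{cal}), then use Lemma~\ref{oo} to bound $\mathbb E_o\big[\int_0^{\tau_r}\xi^{1/m}(X_t)\,dt\big]$ by $B\,T_f(r,L)^{1+k/m}$, and combine. The paper's argument is identical in structure and content.
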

\begin{proof}  Lemma $\ref{cal}$ implies that
\begin{eqnarray}\label{x1}
 &&\log F(\hat{\Gamma},\kappa,\delta) \\
&\leq& O\Big(\log^+\log \mathbb E_o\left[\int_0^{\tau_{r}}\xi^{\frac{1}{m}}(X_{t})dt\right]+\log^+\big(r\sqrt{-\kappa(r)}\big)+\log^+\log r\Big). \nonumber
\end{eqnarray}
Note by Lemma \ref{oo} that there exists a number $B>0$ such that
\begin{equation}\label{x2}
  \mathbb E_o\left[\int_0^{\tau_r}\xi^{\frac{1}{m}}(X_t)dt\right]\leq BT^{1+{k\over m}}_f(r,L)
\end{equation}
for $r>1$ large enough. Combining (\ref{x1}) with (\ref{x2}), we prove the lemma.
\end{proof}
\noindent\textbf{Estimate of  $T(r,\mathscr{R}_M)$}

Write
${\rm{Ric}}_M=\sum_{i,j}R_{i\bar{j}}dz_i\otimes d\bar{z}_j,$  where
\begin{equation}\label{syy}
  R_{i\bar{j}}=-\frac{\partial^2}{\partial z_i\partial \bar{z}_j}\log\det(g_{s\bar{t}}).
\end{equation}
Let $s_M$ be the scalar curvature of $M$ defined by
$$s_M=\sum_{i,j=1}^mg^{i\bar j}R_{i\bar j},$$
where
$(g^{i\bar j})$ is the inverse of $(g_{i\bar j}).$ By virtue of (\ref{syy}), we obtain
$$
  s_M=-\frac{1}{4}\Delta_M\log\det(g_{s\bar t}).
$$
\begin{lemma}\label{s123}  We have
$$s_M\geq m R_M.$$
\end{lemma}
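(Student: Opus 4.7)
The plan is to prove the inequality pointwise by diagonalizing the Ricci tensor in a well-chosen frame and then expressing the scalar curvature as a sum of eigenvalues.

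First I would fix an arbitrary point $x \in M$ and choose local holomorphic coordinates $z_1,\ldots,z_m$ centered at $x$ so that $g_{i\bar j}(x) = \delta_{ij}$. With this normalization the expression $s_M = \sum_{i,j} g^{i\bar j} R_{i\bar j}$ reduces at $x$ to $s_M(x) = \sum_i R_{i\bar i}(x)$, i.e., the trace of the Hermitian matrix $(R_{i\bar j}(x))$. Next, since $(R_{i\bar j}(x))$ is Hermitian, I can apply a unitary linear change of the coordinates to replace $(R_{i\bar j}(x))$ by a diagonal matrix $\mathrm{diag}(\lambda_1,\ldots,\lambda_m)$ with real eigenvalues $\lambda_1,\ldots,\lambda_m$; this change of frame preserves the condition $g_{i\bar j}(x) = \delta_{ij}$, so the trace is unchanged and equals $\sum_i \lambda_i$.

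Now for each $i$ the unit tangent vector $\xi_i = \partial/\partial z_i|_x$ satisfies $\|\xi_i\| = 1$ and
$$\mathrm{Ric}_M(\xi_i,\overline{\xi_i}) = R_{i\bar i}(x) = \lambda_i.$$
By the very definition of $R_M$ as the infimum of $\mathrm{Ric}_M(\xi,\bar\xi)$ over unit vectors $\xi \in T_xM$, this gives $\lambda_i \geq R_M(x)$ for every $i$. Summing over $i=1,\ldots,m$ yields
$$s_M(x) = \sum_{i=1}^m \lambda_i \geq m R_M(x),$$
and since $x$ was arbitrary, this proves $s_M \geq m R_M$ on $M$.

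The argument is essentially linear algebra on the Hermitian matrix $(R_{i\bar j})$, and the only mild subtlety is ensuring the normalization $g_{i\bar j}(x) = \delta_{ij}$ is preserved under the diagonalizing change of frame — which it is, because a unitary transformation preserves the identity. There is no real obstacle here; the estimate simply reflects the fact that the trace of a Hermitian matrix dominates $m$ times its smallest eigenvalue.
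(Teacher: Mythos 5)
Your proof is correct and follows essentially the same route as the paper: normalize $g_{i\bar j}(x)=\delta_{ij}$, observe that each diagonal entry of the Ricci tensor is $\mathrm{Ric}_M$ evaluated on a unit vector and hence bounded below by $R_M(x)$, and sum. The unitary diagonalization step is harmless but unnecessary, since the bound $R_{j\bar j}(x)=\mathrm{Ric}_M(\partial/\partial z_j,\partial/\partial\bar z_j)_x\geq R_M(x)$ already holds for the undiagonalized diagonal entries.
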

\begin{proof} Fix any point $x\in M$, we  take a local holomorphic coordinate system $z$ near $x$ such that $g_{i\bar{j}}(x)=\delta^i_j.$
Then 
\begin{eqnarray*}
% \nonumber to remove numbering (before each equatio
s_M(x)&=&\sum_{j=1}^mR_{j\bar{j}}(x) =\sum_{j=1}^m{\rm{Ric}}_M(\frac{\partial}{\partial z_j},\frac{\partial}{\partial \bar{z}_j})_x\geq mR_M(x)
\end{eqnarray*}
which proves the lemma.
\end{proof}

\begin{lemma}\label{yyy} We have
$$\mathbb E_o\big{[}\tau_r\big{]}\leq\frac{2r^2}{2m-1}.$$
\end{lemma}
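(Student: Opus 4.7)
The plan is to apply Dynkin's formula to the squared distance $u(x) := r(x)^2$, where $r(x) = d(o,x)$, and combine with a standard Riemannian comparison estimate. Since $M$ is simply-connected, complete, and of non-positive sectional curvature, it is a Cartan-Hadamard manifold: the exponential map $\exp_o\colon T_oM\to M$ is a diffeomorphism, the cut locus of $o$ is empty, and $u(x)=\|\exp_o^{-1}(x)\|^2$ is globally smooth on $M$. Moreover $u(o)=0$ and $u(X_{\tau_r})=r^2$ almost surely (the Brownian motion exits $B_o(r)$ through $S_o(r)$), so the Dynkin formula from Section 2 gives
\begin{equation*}
r^2 \;=\; \mathbb{E}_o[u(X_{\tau_r})]-u(o) \;=\; \frac{1}{2}\,\mathbb{E}_o\!\left[\int_0^{\tau_r}\Delta_M u(X_t)\,dt\right].
\end{equation*}

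The crux is a pointwise lower bound for $\Delta_M u$. Using $|\nabla_M r|=1$ one has
\begin{equation*}
\Delta_M u \;=\; 2r\,\Delta_M r+2|\nabla_M r|^2 \;=\; 2r\,\Delta_M r+2,
\end{equation*}
so the task reduces to bounding $\Delta_M r$ from below. The Laplacian comparison theorem on a Cartan-Hadamard manifold asserts that $\Delta_M r(x)\ge (n-1)/r(x)$ for $x\neq o$, with $n=2m$ the real dimension of $M$; this is exactly where the assumption of non-positive sectional curvature enters. Consequently $\Delta_M u \ge 2(2m-1)+2 = 4m$, and in particular $\Delta_M u \ge 2m-1$. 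Substituting into the Dynkin identity,
\begin{equation*}
r^2 \;\ge\; \frac{2m-1}{2}\,\mathbb{E}_o[\tau_r],
\end{equation*}
which rearranges to the claimed bound $\mathbb{E}_o[\tau_r]\le 2r^2/(2m-1)$.

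There is no serious obstacle; this is a direct marriage of Dynkin's formula (set up in Section 2) with classical Riemannian comparison geometry. The only mild point to watch is that $\Delta_M r$ blows up like $1/r$ at the origin, but $u=r^2$ is smooth on all of $M$ and $\Delta_M u$ extends continuously through $o$, so Dynkin applies without extra care. Note that by keeping the $2|\nabla_M r|^2$ contribution one actually obtains the sharper estimate $\mathbb{E}_o[\tau_r]\le r^2/(2m)$; the weaker form stated in the lemma is all that will be required in the sequel, and is recorded in the shape $2r^2/(2m-1)$ presumably to mesh with other constants of the form $2m-1$ appearing in the Calculus Lemma.
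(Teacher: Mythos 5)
Your proof is correct, and it rests on the same two pillars as the paper's: the Laplacian comparison $\Delta_M r\ge(2m-1)/r$ on a Cartan--Hadamard manifold, and stochastic calculus up to the exit time $\tau_r$. The execution differs in a way worth noting. The paper applies the It\^o formula to the distance function $r_1(x)=d(o_1,x)$ from an auxiliary point $o_1\ne o$ (so as to dodge the singularity of the distance function at its pole), bounds $1/r_1(X_s)$ from below by $1/\max_{\overline{B_o(r)}}r_1$, kills the martingale term by taking expectations, and then lets $o_1\to o$. You instead apply Dynkin directly to $u=r^2$, which is globally smooth on a Cartan--Hadamard manifold, so no offset point and no limiting argument are needed; moreover, keeping the $2|\nabla_M r|^2$ term gives you $\Delta_M u\ge 4m$ and hence the sharper bound $\mathbb{E}_o[\tau_r]\le r^2/(2m)$, of which the stated $2r^2/(2m-1)$ is a weakening. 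The one point you gloss over, common to both arguments, is that Dynkin (respectively, optional stopping for the $B_t$ term) at the random time $\tau_r$ presupposes $\mathbb{E}_o[\tau_r]<\infty$; this is harmless, since applying your inequality at $\tau_r\wedge t$ and letting $t\to\infty$ by monotone convergence yields both the finiteness and the bound simultaneously.
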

\begin{proof}   
Let $X_t$ be the Brownian motion in $M$ started at $o\not=o_1,$ where $o_1\in B_o(r).$  Let  $r_1(x)$ be  the distance function  of $x$ from $o_1.$
Apply It$\rm{\hat{o}}$ formula to $r_1(x)$
\begin{equation}\label{kiss}
  r_1(X_t)-r_1(X_0)=B_t-L_t+\frac{1}{2}\int_0^t\Delta_Mr_1(X_s)ds,
\end{equation}
here $B_t$ is the standard Brownian motion in $\mathbb R,$ and $L_t$ is a local time on cut locus of $o,$ an increasing process
which increases only at cut loci of $o.$ Since $M$ is simply connected and  non-positively  curved, then
$$\Delta_Mr_1(x)\geq\frac{2m-1}{r_1(x)}, \ \ L_t\equiv0.$$
By (\ref{kiss}), we arrive at
$$r_1(X_t)\geq B_t+\frac{2m-1}{2}\int_0^t\frac{ds}{r_1(X_s)}.$$
Let $t=\tau_r$ and take expectation on both sides of the above inequality, then it yields that 
$$\max_{x\in S_o(r)} r_1(x)\geq \frac{(2m-1)\mathbb E_o[\tau_r]}{2\max_{x\in S_o(r)} r_1(x)}.$$
Let $o'\rightarrow o,$ 
we are led to the conclusion.
\end{proof}
\begin{lemma}\label{b0}
Let $\kappa$ be defined by $(\ref{kappa}).$ We have
$$T(r,\mathscr{R}_M)\geq2m\kappa(r)r^2.$$
\end{lemma}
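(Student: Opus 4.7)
The plan is to reduce $T(r,\mathscr{R}_M)$ to a probabilistic integral of the scalar curvature and then apply the pointwise Ricci bound packaged in $\kappa(r)$ together with the exit-time estimate of Lemma~\ref{yyy}. The key identity is that, since $\mathscr{R}_M \wedge \alpha^{m-1} = \frac{1}{m} T_g(\mathscr{R}_M)\alpha^m$ by Lemma~\ref{o1} and $T_g(\mathscr{R}_M)=s_M$, the weight $e_{\mathscr{R}_M}$ equals $2 s_M$. Equivalently, from the formula already recorded,
\[
T(r,\mathscr{R}_M) \;=\; -\frac{1}{4}\mathbb{E}_o\!\left[\int_0^{\tau_r}\Delta_M \log\det\bigl(g_{i\bar j}(X_t)\bigr)\,dt\right] \;=\; \mathbb{E}_o\!\left[\int_0^{\tau_r} s_M(X_t)\,dt\right],
\]
using the scalar-curvature expression $s_M = -\tfrac14 \Delta_M\log\det(g_{s\bar t})$ derived from~(\ref{syy}).

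Next I invoke the pointwise comparison $s_M \geq m R_M$ from Lemma~\ref{s123}, and combine it with the definition of $\kappa$ in~(\ref{kappa}): for every $x\in\overline{B_o(r)}$ one has $R_M(x)\geq (2m-1)\kappa(r)$. Since the Brownian path stays in $B_o(r)$ before time $\tau_r$, this gives the pointwise-in-time lower bound $s_M(X_t)\geq m(2m-1)\kappa(r)$ on $[0,\tau_r]$, hence
\[
T(r,\mathscr{R}_M) \;\geq\; m(2m-1)\,\kappa(r)\,\mathbb{E}_o[\tau_r].
\]

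Finally I apply Lemma~\ref{yyy}. The only subtlety (and the sole thing that could easily be mishandled) is the sign: $\kappa(r)\leq 0$, so the factor $m(2m-1)\kappa(r)$ is nonpositive, and an \emph{upper} bound on $\mathbb{E}_o[\tau_r]$ produces a \emph{lower} bound on the product. With $\mathbb{E}_o[\tau_r]\leq \tfrac{2r^2}{2m-1}$ the inequality flips in the desired direction:
\[
m(2m-1)\,\kappa(r)\,\mathbb{E}_o[\tau_r] \;\geq\; m(2m-1)\,\kappa(r)\cdot\frac{2r^2}{2m-1} \;=\; 2m\,\kappa(r)\,r^2,
\]
which completes the proof. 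There is no real obstacle beyond tracking this sign; every ingredient (the Lemma~\ref{o1} identification of $e_{\mathscr{R}_M}$ with $2 s_M$, the scalar-Ricci comparison of Lemma~\ref{s123}, and the exit-time bound of Lemma~\ref{yyy}) is already in place.
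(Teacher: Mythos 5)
Your proof is correct and follows essentially the same route as the paper: reduce $T(r,\mathscr{R}_M)$ to $\mathbb{E}_o[\int_0^{\tau_r}s_M(X_t)\,dt]$, apply Lemma \ref{s123} and the definition of $\kappa$, and then use the exit-time bound of Lemma \ref{yyy}. Your explicit attention to the sign of $\kappa(r)$ when invoking the upper bound on $\mathbb{E}_o[\tau_r]$ is a welcome clarification of a step the paper leaves implicit.
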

\begin{proof} Lemma \ref{s123} implies that
$0\geq s_M\geq mR_M.$
By co-area formula
\begin{eqnarray*}
% \nonumber to remove numbering (before each equation)
T(r,\mathscr{R}_M)&=&-{1\over4}\mathbb E_o\left[\int_0^{\tau_r}\Delta_M\log\det(g_{i\bar{j}}(X_t))dt\right] \\
&=&\mathbb E_o\left[\int_0^{\tau_r}s_{M}(X_t)dt\right] \geq m\mathbb E_o\left[\int_0^{\tau_r}R_M(X_t)dt\right] \\
&\geq& m(2m-1)\kappa(r)\mathbb E_o[\tau_r].
\end{eqnarray*}
Using Lemma \ref{yyy}, we show the lemma.
\end{proof}
\subsection{Proof of Theorem \ref{thm1}}~

Consider  the (analytic) universal covering $$\pi:\tilde{M}\rightarrow M.$$ Via the pull-back of $\pi,$ $\tilde{M}$ can be equipped with the induced metric
 from the
metric of $M.$ So, under this metric, $\tilde{M}$ becomes a simply-connected complete K\"ahler manifold of non-positive sectional curvature.
Take a diffusion process $\tilde{X}_t$ in $\tilde{M}$  such that $X_t=\pi(\tilde{X}_t),$ where $X_t$ is the Brownian motion started at $o\in M.$ Then
$\tilde{X}_t$ is the Brownian motion generated by $\Delta_{\tilde{M}}/2$ induced from the pull-back metric. Let $\tilde{X}_t$ start at $\tilde{o}\in\tilde{M}$  with $o=\pi(\tilde{o}),$  we have
$$\mathbb E_o[\phi(X_t)]=\mathbb E_{\tilde{o}}\big{[}\phi\circ\pi(\tilde{X}_t)\big{]}$$
for $\phi\in \mathscr{C}_{\flat}(M).$ Set $$\tilde{\tau}_r=\inf\big{\{}t>0: \tilde{X}_t\not\in B_{\tilde{o}}(r)\big{\}},$$ where
$B_{\tilde{o}}(r)$ is a geodesic ball centered at $\tilde{o}$ with radius $r$ in $\tilde{M}.$
 If necessary, one can extend the filtration in probability space where $(X_t,\mathbb P_o)$ are defined so that $\tilde{\tau}_r$ is a stopping time with
 respect to a filtration where the stochastic calculus of $X_t$ works.
By the above arguments, we may assume $M$ is simply connected without loss of generality by lifting $f$ to the covering, see \cite{atsuji}.

\begin{proof}
The equality
$$f^*\Phi_{D,\lambda}=\xi\alpha^m,$$
 where $\Phi_{D,\lambda}$ is defined by (\ref{sing}), implies  that
\begin{eqnarray*}
% \nonumber to remove numbering (before each equation)
   dd^c\log\xi&=&(1-\lambda)f^*c_1(L,h)-f^*{\rm{Ric}}\Omega+\mathscr R_M-(1-\lambda)f^*D+{\rm{Ram}}_f \\
   &\geq& (1-\lambda)f^*c_1(L,h)-f^*{\rm{Ric}}\Omega+\mathscr R_M-{\rm{Supp}}f^*D
\end{eqnarray*}
in the sense of currents. Thus,
\begin{eqnarray}\label{5q}
&&\frac{1}{4}\int_{B_o(r)}g_r(o,x)\Delta_M\log\xi(x) dV(x) \\
&\geq& (1-\lambda)T_f(r,L)+T_f(r,K_V)+T(r,\mathscr{R}_M)-N^{[1]}_f(r,D)+O(1), \nonumber
\end{eqnarray}
where $K_V$ is the canonical line bundle over $V$. By Dynkin formula
\begin{eqnarray*}
\frac{1}{2}\mathbb E_o\left[\int_0^{\tau_r}\Delta_M\log\xi(X_t)dt\right]
=\mathbb E_o\big{[}\log\xi(X_{\tau_r})\big{]}-\log\xi(o).
\end{eqnarray*}
By this with (\ref{5q}) to get
\begin{eqnarray*}
 && \frac{1}{2}\mathbb E_o\big{[}\log\xi(X_{\tau_r})\big{]} \\
 &\geq& (1-\lambda)T_f(r,L)+T_f(r,K_V)+T(r,\mathscr{R}_M)-N^{[1]}_f(r,D) +O(1). \nonumber
\end{eqnarray*}
Take $r_0>0$ such that $T_f(r,L)>1$ as $r\geq r_0.$ Replacing $\lambda$ by $1/T_f(r,L),$ we obtain
\begin{eqnarray}\label{w1}
 && \frac{1}{2}\mathbb E_o\big{[}\log\xi(X_{\tau_r})\big{]}  \\
 &\geq& T_f(r,L)+T_f(r,K_V)+T(r,\mathscr{R}_M)  -N^{[1]}_f(r,D) +O(1) \nonumber
\end{eqnarray}
for $r>1$ large enough.
On the other hand, using Lemma \ref{cal}, for any $\delta>0,$ there exists a set $E'_{\delta}\subset(1,\infty)$ such that
\begin{eqnarray*}
% \nonumber to remove numbering (before each equation)
  && \frac{1}{2}\mathbb E_o\big{[}\log\xi(X_{\tau_r})\big{]} \\
   &\leq& \frac{m}{2}\log\mathbb E_o\big{[}\xi^{\frac{1}{m}}(X_{\tau_r})\big{]} \\
&\leq& \frac{m}{2}\log \mathbb E_o\left[\int_0^{\tau_{r}}\xi^{1\over m}(X_{t})dt\right]+
 \frac{m}{2}\log\frac{F(\hat{\Gamma},\kappa,\delta)e^{(2m-1)r\sqrt{-\kappa(r)}}\log r}{C\omega_{2m-1}}\\
&:=& \frac{m}{2}(A_1+A_2)
\end{eqnarray*}
holds for $r>1$ outside $E'_{\delta}$ with $\Gamma=\xi^{1/m}.$
For $A_1,$  apply Lemma \ref{oo} to get
$$A_1\leq \frac{m+k}{m}\log T_f(r,L)+O(1)$$
 as $r>r_0.$  For $A_2,$ by  Lemma \ref{last} we have
\begin{eqnarray*}
A_2&\leq& \log F(\hat{\Gamma},\kappa,\delta)+(2m-1)r\sqrt{-\kappa(r)}+\log^+\log r+O(1) \\
&\leq&  O\Big(\log^+\log T_f(r,L)+\log^+\big(r\sqrt{-\kappa(r)}\big)+\log^+\log r\Big) \\
&&+(2m-1)r\sqrt{-\kappa(r)}+\log^+\log r+O(1) \\
&\leq&  O\Big(\log^+\log T_f(r,L)+r\sqrt{-\kappa(r)}+\log^+\log r\Big)
\end{eqnarray*}
as $r>r_0.$ Hence, it finally follows that
\begin{eqnarray*}\label{w2}
% \nonumber to remove numbering (before each equation)
  && \frac{1}{2}\mathbb E_o\big{[}\log\xi(X_{\tau_r})\big{]} \\
&\leq& \frac{m+k}{2}\log T_f(r,L)+ O\Big(\log^+\log T_f(r,L)+r\sqrt{-\kappa(r)}+\log^+\log r\Big)
\end{eqnarray*}
for $r>1$ outside $E_\delta=E'_\delta\cup (0,r_0]$. By the above with (\ref{w1}) and Lemma \ref{b0}, the theorem is proved.
\end{proof}

\vskip\baselineskip

\label{lastpage-01}
\end{document}